%% file: template.tex
\documentclass[12pt]{extarticle}
\input{prefix}%

\BibLatexMode{%
   \bibliography{template}
}

\begin{document}

\title{ Smirnov Decomposition of a Horizontal Vector Charge in the Heisenberg Group}

\author{
Zhengyao Huang, Wilhelm Klingenberg
}
\date{\today}

\maketitle
 
\begin{abstract}
\noindent
A divergence-free horizontal vector current in Heisenberg space may be viewed as an element of the dual space of horizontal test vector fields. By applying a horizontal Liouville theorem in this setting, the flow lines of such a vector field generate a family of horizontal curves and an associated measure on this collection. In this paper, we provide a direct proof of the Smirnov decomposition for a Federer-Fleming current within the horizontal distribution.
\end{abstract}
\noindent\textbf{MSC (2020):} 28A75, 49Q15, 22E30, 53C17
\makeatletter
\def\oversortoftilde#1{\mathop{\vbox{\m@th\ialign{##\crcr\noalign{\kern3\p@}%
      \sortoftildefill\crcr\noalign{\kern3\p@\nointerlineskip}%
      $\hfil\displaystyle{#1}\hfil$\crcr}}}\limits}

\def\sortoftildefill{$\m@th \setbox\z@\hbox{$\braceld$}%
  \braceld\leaders\vrule \@height\ht\z@ \@depth\z@\hfill\braceru$}

\makeatother

\tableofcontents
\clearpage 
\section{Introduction}
Let $T$ be an $\mathbb{R}^n$-valued, countably additive set function defined on the Borel $\sigma$-algebra $\mathscr{B}_n$ of $\mathbb{R}^n$:
$$
T(E)=\left(T_1(E), \ldots, T_n(E)\right), \quad E \in \mathscr{B}_n,
$$
where each $T_j$ is a finite real measure on $\mathscr{B}_n$ (a scalar charge). We refer to $T$ as a \emph{vector charge}. The space of all such vector charges is endowed with the total variation norm
$$
\operatorname{var}(T):=\sup\left\{ \sum_j\left|T\left(E_j\right)\right| : E_j \text{ is a countable Borel partition of }\mathbb{R}^n\right\}.
$$
By the Riesz--Kakutani theorem, a vector charge with finite total variation may be identified with a Federer--Fleming $1$-current of finite mass.

A celebrated result due to S. Smirnov \cite{Smirnov1993} provides a decomposition of vector charges with finite total variation into an integral of elementary curve charges. More precisely, there exists a finite measure $\nu$ on the space $K_\ell$ of Lipschitz curves of some length $\ell$ such that
\[
\mu = \int_{K_\ell} [\gamma]\,d\nu(\gamma).
\]
Our main result in Theorem 4,1  reads as follows.
\begin{theorem}\label{thm: MT}
 For every $\ell > 0$ and every horizontal charge $\mu = (\alpha_1q,\dots,\alpha_mq)$ where $\alpha = (\alpha_1,\dots,\alpha_m) : \mathbb{H}^n \to H \mathbb{H}^n$ is a $q$-measurable horizontal vector field with $q$ a Radon measure with $\operatorname{div}_H \mu = 0$, there is a finite positive measure $\nu$ on $K_\ell$ with
$$\mu = \int_{K_\ell} [\gamma] d\nu(\gamma), \quad \left\|\mu\right\|_{\mathcal{M}} = \int_{K_\ell} \left\|[\gamma]\right\|d\nu(\gamma)$$
and $\nu$-almost every $\gamma \in K_\ell$ has values in $\operatorname{supp}(\mu)$ and satisfies $\|[\gamma]\| = L(\gamma) = \ell$.
\end{theorem}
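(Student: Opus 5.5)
We follow the Smirnov strategy adapted to the horizontal distribution, as the abstract indicates: smoothing, a flow of horizontal curves whose time-$\ell$ pieces we average, and a limit.

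\textbf{Step 1 (horizontal regularisation).} We mollify $\mu$ by group convolution $\mu_\varepsilon=\mu*\rho_\varepsilon$ with a smooth bump $\rho_\varepsilon$ on $\mathbb{H}^n$. Convolution on the appropriate side commutes with the left-invariant fields $X_j$, so $\mu_\varepsilon$ remains horizontal, $\operatorname{div}_H\mu_\varepsilon=0$, and $\|\mu_\varepsilon\|_{\mathcal M}\le\|\mu\|_{\mathcal M}$. We write $\mu_\varepsilon=V_\varepsilon\,\lambda_\varepsilon$, where $\lambda_\varepsilon=|\mu_\varepsilon|$ has a positive smooth density and $V_\varepsilon=\sum_j v_j X_j$ is a smooth horizontal unit field. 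To make the density strictly positive and the measure finite, we add a small multiple of a Gaussian-type weight. This perturbation is the delicate part: we compensate it by a correction field $W$ that is horizontal, small in mass and divergence-free, to be found via a sub-Laplacian equation $\Delta_H u=\cdot$ and $W=\nabla_H u$.

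\textbf{Step 2 (horizontal Liouville and the flow).} The curves $\dot\gamma=V_\varepsilon(\gamma)$ are horizontal and parametrised by arc length. In divergence form, $\operatorname{div}_H(V_\varepsilon\lambda_\varepsilon)=0$, and $\operatorname{div}_H$ coincides with the Riemannian/Haar divergence of the horizontal field because the $X_j$ are Haar-divergence-free. The horizontal Liouville theorem therefore gives that the flow $\Phi_t$ of $V_\varepsilon$ preserves $\lambda_\varepsilon$. We define $\nu_\varepsilon=\ell^{-1}\,(x\mapsto\Phi_{[0,\ell]}(x))_\#\lambda_\varepsilon$ on $K_\ell$. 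For a horizontal test form $\omega$, Fubini and invariance give
\[
\int[\gamma](\omega)\,d\nu_\varepsilon=\ell^{-1}\int_0^\ell\!\!\int\langle\omega,V_\varepsilon\rangle(\Phi_t x)\,d\lambda_\varepsilon\,dt=\mu_\varepsilon(\omega).
\]
Since every curve has $\|[\gamma]\|=L(\gamma)=\ell$, we also get the mass identity $\int\|[\gamma]\|\,d\nu_\varepsilon=\lambda_\varepsilon(\mathbb{H}^n)=\|\mu_\varepsilon\|$.

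\textbf{Step 3 (limit).} The total masses $\nu_\varepsilon(K_\ell)=\|\mu_\varepsilon\|/\ell$ are bounded. The curves are $1$-Lipschitz for the Carnot--Carathéodory metric, and their starting points are distributed by $\lambda_\varepsilon\to|\mu|$, which is tight. By Arzelà--Ascoli, $\{\nu_\varepsilon\}$ is tight on $K_\ell$, and we extract $\nu_\varepsilon\rightharpoonup\nu$. The map $\gamma\mapsto[\gamma](\omega)$ is continuous under uniform convergence of uniformly Lipschitz horizontal curves: it is a line integral of a smooth form, continuous under uniform convergence with bounded length. Hence $\mu=\int[\gamma]\,d\nu$.

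For the mass, lower semicontinuity of $\|[\gamma]\|\le L(\gamma)\le\ell$ together with
\[
\|\mu\|\le\int\|[\gamma]\|\,d\nu\le\ell\,\nu(K_\ell)=\lim\|\mu_\varepsilon\|=\|\mu\|
\]
forces equality. So $\nu$-a.e.\ curve satisfies $\|[\gamma]\|=L(\gamma)=\ell$.

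For the support, equality in the mass identity forces $\nu$-a.e.\ $\gamma$ to lie in $\operatorname{supp}|\mu|$. Otherwise, cancellation-free additivity would give a positive mass of $\int\|[\gamma]\|\,d\nu$ outside the support, where $\mu$ vanishes. This is the standard argument: $|\mu|(E)=\int\|[\gamma]\llcorner E\|\,d\nu$ for Borel $E$ once the total masses agree.

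\textbf{Main obstacle.} I expect the hardest step to be Step 1 together with the Liouville step: producing a smooth horizontal divergence-free approximation with nonvanishing density. Where $|\mu_\varepsilon|=0$ the field $V_\varepsilon$ is undefined, and the unbounded support complicates finite-time flow. I would first try adding $\delta\,(X_1 e^{-|x|^2}$-type$)$ closed loops, i.e.\ a superposition of small horizontal circles lifted to $\mathbb{H}^n$. This means adding the exact divergence-free current $\delta\int[\text{loop}]$, which is horizontal and positive-density near each point. Its contribution of mass $O(\delta)$ then disappears in the limit $\delta\to0$.
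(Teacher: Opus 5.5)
Your overall scheme (mollify, flow, horizontal Liouville, push forward to $K_\ell$, pass to the limit, squeeze the mass) is the paper's, and Step 3 is essentially sound. The gap is in Steps 1--2.

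To get a unit-speed field you normalise by the pointwise norm of the mollified charge, $\lambda_\varepsilon=|\mu*\rho_\varepsilon|\,h$. This density is in general neither positive nor smooth. It vanishes wherever the components of $\mu*\rho_\varepsilon$ cancel, and it is only Lipschitz at its zeros. So $V_\varepsilon=\mu_\varepsilon/|\mu_\varepsilon|$ is discontinuous there, and neither the flow nor the Liouville theorem (stated for $C^2_H$ fields) is available.

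Your proposed repairs do not fix this:
\begin{itemize}
\item A field $W=\nabla_H u$ has $\operatorname{div}_H W=\Delta_H u$. It is therefore divergence-free only if $u$ is $\Delta_H$-harmonic, and it is unclear what it is meant to compensate.
\item Adding $\delta\Lambda$, with $\Lambda$ a superposition of horizontal loops, does not make $|\mu_\varepsilon+\delta\Lambda|$ nonvanishing. The $2n$ coefficients live on a $(2n+1)$-dimensional space, so zeros generically form curves. A transversal zero has degree $\pm1$ on small spheres in normal $2n$-dimensional slices, so it survives every pointwise-small perturbation. As $\delta\to0$ the zeros just move.
\end{itemize}

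The missing idea is the paper's normalisation by the mollified \emph{total variation} instead of the norm of the mollified charge:
\[
\phi=\frac{\mu*J}{|\mu|*J},\qquad \varrho=(|\mu|*J)\,h .
\]
This choice gives everything you need:
\begin{itemize}
\item Since $|\mu*J|\le|\mu|*J$, you get $|\phi|\le 1$.
\item $\phi\varrho=(\mu*J)h$ is divergence-free with no correction term.
\item $\varrho(\mathbb{H}^n)=\|\mu\|$ exactly.
\item $|\mu|*J$ is smooth, and strictly positive as well if the kernel is everywhere positive.
\end{itemize}
The flow lines lie in $K_\ell$ but need not have unit speed. That is harmless: you never need $\|[\gamma]\|=L(\gamma)=\ell$ at the approximate level. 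Your own chain $\|\mu\|\le\int\|[\gamma]\|\,d\nu\le\ell\,\nu(K_\ell)=\|\mu\|$ produces it in the limit.

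Likewise, your Gaussian-weight idea works once unit speed is dropped. For a Gaussian-type weight $G$, the field $\mu_\varepsilon/(|\mu_\varepsilon|+\delta G)$ is already divergence-free against $(|\mu_\varepsilon|+\delta G)h$, though it is only Lipschitz.

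With this change, your tightness/Prokhorov argument directly on $K_\ell$ is a legitimate alternative to the paper's one-point compactification $\hat K_\ell$ and Banach--Alaoglu. It works because the starting points are distributed by the uniformly tight measures $(|\mu|*J_\varepsilon)h$.
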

This decomposition holds both for solenoidal charges (whose distributional divergence vanishes) and for charges whose distributional divergence is a finite measure. Subsequently, Stepanov, Paolini, and Georgiev \cite{PAOLINI20123358,PAOLINI20131269} generalized this decomposition to the setting of acyclic metric currents and metric cycles. It is also worth noting that analogous decompositions arise naturally in the study of $BV$-functions \cite{Alberti_1993}, where a measure $\mu$ on a metric space $X$ can be represented as an integral over path fragments $\operatorname{Frag}(X)$:
\[
\mu=\int_{\operatorname{Frag}(X)} \nu_\gamma \, dP(\gamma).
\]

Smirnov's decomposition theory also plays a significant role in the study of the flat chain conjecture. In their foundational work, Ambrosio and Kirchheim \cite{ambrosio2000currents} observed that every metric current $T \in \mathbf{M}_k(\mathbb{R}^n)$ corresponds to a Federer--Fleming current $\widetilde{T} \in \mathscr{M}_k(\mathbb{R}^n)$, and that this correspondence restricts to a linear isomorphism between the spaces of normal currents, $\mathbf{N}_k(\mathbb{R}^n)$ and $\mathscr{N}_k(\mathbb{R}^n)$.

The full scope of this correspondence, however, remains an open question. The space of flat $k$-chains $\mathscr{F}_k(\mathbb{R}^n)$ is defined as the completion of the normal Federer--Fleming $k$-currents under the flat norm:
\[
\mathbf{F}(T) = \inf\left\{\mathbf{M}(R) + \mathbf{M}(S): R +\partial S = T\right\}.
\]
While Ambrosio and Kirchheim showed that every flat chain $F \in \mathscr{F}_k(\mathbb{R}^d)$ induces a metric current $T \in \mathbf{M}_k(\mathbb{R}^n)$, it is unclear whether the converse holds. In its most concise form, the $k$-flat chain conjecture ($k$-FCC) asserts the natural identification:
\[
\mathscr{F}_k(\mathbb{R}^d) \cong \mathbf{M}_k(\mathbb{R}^d).
\]
Recently, Rabasa and Bouchitté \cite[Theorem~A]{arroyorabasa2025structuralpropertiesonedimensionalmetric} extended the metric $1$-current decomposition from Lipschitz curves to the class of $\operatorname{SBV}(I,X)$ curves, establishing a density characterization of the $1$-FCC.
\noindent
In this paper, we study the Smirnov decomposition for horizontal vector charges in the Heisenberg group $\mathbb{H}^n$. We provide a direct proof adapted from the arguments given by Arenas and Wengenroth \cite{rodríguezarenas2024smirnovdecompositionsvectorfields}.

Note that Ambrosio--Kirchheim metric currents of finite mass in the Heisenberg group coincide with Federer--Fleming horizontal currents of finite mass \cite{franchi2025currentsheisenberggroups} and any horizontal vector charge of finite total variation corresponds to a metric current. Since the decomposition is already established at the metric current level by Stepanov and Paolini's work \cite{PAOLINI20123358}, it can be shown to descend to a classical Federer-Fleming current. The direct proof presented here offers a self-contained approach and illuminates structural properties of the decomposition specific to the Heisenberg setting.
\clearpage 
\section{Horizontal Vector Measures}
Let $\mathbb{H}^n$ be the Heisenberg group, identified with $\mathbb{R}^{2n+1}$ and written in coordinates
$$
(x,y,z)=(x_1,\dots,x_n,y_1,\dots,y_n,z).
$$
The standard horizontal left-invariant vector fields are
$$
X_i=\partial_{x_i}-\frac{1}{2}y_i\,\partial_z,
\qquad
Y_i=\partial_{y_i}+\frac{1}{2}x_i\,\partial_z,
\qquad i=1,\dots,n.
$$
Accordingly, the horizontal bundle is the rank-$2n$ subbundle
$$
H_x \mathbb{H}^n=\operatorname{span}\left\{X_1(x),\dots,X_n(x),Y_1(x),\dots,Y_n(x)\right\}
\quad \forall x \in \mathbb{H}^n.
$$
A subriemannian structure is defined on $\mathbb{H}^n$ once one endows each fiber $H_x \mathbb{H}^n$ of the horizontal bundle $H \mathbb{H}^n$ with a scalar product $\langle\cdot, \cdot\rangle_x$; its associated norm is denoted as $|\cdot|_x$. When clear from the context, we will drop the subscript $x$, simply writing $\langle\cdot, \cdot\rangle$ and $|\cdot|$.

Measurable sections of the horizontal bundle $H \mathbb{H}^n$ are called horizontal sections, or simply horizontal vector fields, and elements of $H_x \mathbb{H}^n$ are called horizontal vectors.

Accordingly, any horizontal vector field $\Phi: \mathbb{H}^n \to H \mathbb{H}^n$ can be written as
$$
\Phi=\sum_{i=1}^n \phi_i X_i+\sum_{i=1}^n \phi_{n+i}Y_i.
$$
Equivalently, we identify $\Phi$ with its coefficient vector
$$
\Phi=\left(\phi_1,\dots,\phi_n,\phi_{n+1},\dots,\phi_{2n}\right).
$$
\begin{definition}
(Vector measure). Let $\mathscr{B}(\mathbb{H}^n)$ be the Borel $\sigma$-algebra on the Heisenberg group $\mathbb{H}^n$. We define a horizontal vector measure on $\mathbb{H}^n$ as an $2n$-tuple $\mu = (\mu_1,\dots,\mu_{2n}) = \sum_{i=1}^n \mu_i X_i + \sum_{i=1}^n \mu_{i+n} Y_i$, where each component $\mu_k : \mathscr{B}(\mathbb{H}^n) \to \mathbb{R}$ is a real-valued ( i.e. signed) Radon measure. Every vector measure $\mu$ acts on test fields $\Phi = (\phi_1,\dots,\phi_{2n}) \in C_c^{\infty} (\mathbb{H}^n, H \mathbb{H}^n) =: \mathcal{D}(\mathbb{H}^n, H \mathbb{H}^n)$ as
$$ \langle \mu, \Phi \rangle = \sum_{k=1}^{2n} \int_{\mathbb{H}^n} \phi_k d\mu_k.$$
We denote $\mathcal{M}(\mathbb{H}^n,H\mathbb{H}^n)$ the space of all vector measures on $\mathbb{H}^n$.
\end{definition}

Since $\mathcal{D}(\mathbb{H}^n, H \mathbb{H}^n)$ is dense in $C_0(\mathbb{H}^n, H \mathbb{H}^n)$, if we take $T \in \mathcal{D}(\mathbb{H}^n, H \mathbb{H}^n)^*$ such that $$\sup \left\{T(\Phi): \Phi \in \mathcal{D}(\mathbb{H}^n, H \mathbb{H}^n),\|\Phi\|_{\infty} \leq 1\right\}<+\infty,$$ we can extend uniquely $T$ to an element of $C_0(\mathbb{H}^n, H \mathbb{H}^n)^*$. Hence, any $T$ turns out to be associated with a vector measure $\mu \in \mathcal{M}(\mathbb{H}^n, H \mathbb{H}^n)$. We henceforth set
$$
\|\mu\|_{\mathcal{M}}:=\sup \left\{T(\Phi): \Phi \in \mathcal{D}(\mathbb{H}^n, H \mathbb{H}^n),\|\Phi\|_{\infty} \leq 1\right\}=\|T\|_{C_0^*}
$$
This is the total variation of $\mu$, which is also denoted as $\operatorname{var}(\mu)$.

From the polar decomposition, $\mu = \alpha |\mu|$, where $\alpha : \mathbb{H}^n \to \mathbb{R}^{2n}$, $|\alpha| = 1$, and $|\mu|$ is the variation measure of $\mu$. We define the horizontal divergence $\operatorname{div}_H \mu$ as the distribution acting on $f \in \mathcal{D}(\mathbb{H}^n,\mathbb{R})$ by
$$
\langle \operatorname{div}_H \mu, f \rangle = - \langle \mu, \nabla_H f \rangle.
$$
Substituting $\mu = \alpha |\mu|$, we obtain
$$
\langle \operatorname{div}_H \mu, f \rangle = - \int_{\mathbb{H}^n} \langle \nabla_H f(x), \alpha(x) \rangle_x \, d|\mu|(x).
$$
Writing
$$
\alpha=\sum_{i=1}^n \alpha_i X_i+\sum_{i=1}^n \alpha_{n+i}Y_i
$$
with respect to the frame $\{X_1,\dots,X_n,Y_1,\dots,Y_n\}$, this becomes
$$
\langle \operatorname{div}_H \mu, f \rangle
=-\sum_{i=1}^n \int_{\mathbb{H}^n} \alpha_i(X_i f) \, d|\mu|
-\sum_{i=1}^n \int_{\mathbb{H}^n} \alpha_{n+i}(Y_i f) \, d|\mu|.
$$

It is worth noting that a horizontal vector measure with finite total variation, i.e., a vector charge, can be identified with a horizontal Federer-Fleming 1-current \cite{franchi2025currentsheisenberggroups}. For completeness, we provide the details of this identification.
\begin{definition}
(Horizontal and Vertical Differential Forms, 2.3, \cite{franchi2025currentsheisenberggroups}). Let us define
$$
\mathfrak{h}_1 = \operatorname{span}\left\{X_1,\dots, X_n, Y_1,\dots, Y_n\right\},
\qquad
\mathfrak{h}_2 = \operatorname{span}\{Z\},
$$
and set $\mathfrak{h} = \mathfrak{h}_1 \oplus \mathfrak{h}_2$. The dual space of $\mathfrak{h}$ is denoted by $\bigwedge^1 \mathfrak{h}$. The basis of $\bigwedge^1 \mathfrak{h}$ dual to $\left\{X_1, \ldots, X_n, Y_1, \ldots, Y_n, Z\right\}$ is the family of covectors $\left\{d x_1, \ldots, d x_n, d y_1, \ldots, d y_n, \theta\right\}$, where
$$
\theta:=d z-\frac{1}{2} \sum_{j=1}^n\left(x_j d y_j-y_j d x_j\right)
$$
is the contact form on $\mathbb{H}^n$. We denote by $\langle\cdot, \cdot\rangle$ the inner product on $\bigwedge^1 \mathfrak{h}$ for which $$\left(d x_1, \ldots, d x_n, d y_1, \ldots, d y_n, \theta\right)$$ is an orthonormal basis.

We set
$$
\omega_i:=d x_i, \quad \omega_{i+n}:=d y_i, \quad \text{and} \quad \omega_{2 n+1}:=\theta, \quad \text{for } i=1, \ldots, n.
$$
We put $\bigwedge^0 \mathfrak{h}=\mathbb{R}$ and, for $1 \leq h \leq 2 n+1$,
$$
\bigwedge^h \mathfrak{h}:=\operatorname{span}\left\{\omega_{i_1} \wedge \cdots \wedge \omega_{i_h}: 1 \leq i_1<\cdots<i_h \leq 2 n+1\right\}.
$$
Similarly,
$$
\bigwedge^h \mathfrak{h}_1:=\operatorname{span}\left\{\omega_{i_1} \wedge \cdots \wedge \omega_{i_h}: 1 \leq i_1<\cdots<i_h \leq 2 n\right\}.
$$
Throughout this paper, elements of $\bigwedge^h \mathfrak{h}$ are identified with the left-invariant differential forms of degree $h$ on $\mathbb{H}^n$. In degree $k$, one has the orthogonal decomposition
$$
\bigwedge^k \mathfrak{h}=\bigwedge^k \mathfrak{h}_1 \oplus \theta \wedge \bigwedge^{k-1} \mathfrak{h}_1,
$$
where forms in the first summand are called horizontal and forms in the second summand are called vertical. In particular, a differential form is vertical if and only if it is divisible by the contact form $\theta$.
\end{definition}
\begin{definition}
(Definition 3.2, \cite{franchi2025currentsheisenberggroups}). A Federer-Fleming current $T$ is horizontal if $T$ and $\partial T$ vanish on vertical forms. Equivalently, if
$$
T\llcorner\theta=0, \quad T\llcorner d\theta=0.
$$
\end{definition}
We claim that there is a one-to-one correspondence between vector charges and horizontal Federer-Fleming $1$-currents. We begin with a horizontal Federer-Fleming $1$-current $T$. Let $\varphi \in \mathcal{D}(\mathbb{H}^n)$. By the Riesz-Kakutani representation theorem, there exists a unique Radon measure $\mu_T$ such that
$$
(T\llcorner \theta)(\varphi) = T(\varphi \theta) = 0 = \int_{\mathbb{H}^n} \langle \varphi\theta, \vec{T} \rangle \, d\mu_T,
$$
where $\vec{T}$ is a $\mu_T$-measurable vector field on $\mathbb{H}^n$ with $|\vec{T}| = 1$ for $\mu_T$-a.e. Hence $\theta(x) \perp \vec{T}(x)$ for $\mu_T$-a.e. $x \in \mathbb{H}^n$, and therefore $\vec{T}$ must be horizontal. We may thus define the horizontal vector measure $\mu := \vec{T}\,\mu_T$, which has finite mass.

Conversely, let $\mu$ be a horizontal vector measure with $\|\mu\|< \infty$, written as
$$
\mu = \sum_{i=1}^n \mu_i X_i + \sum_{i=1}^n \mu_{i+n} Y_i.
$$
Define a current $T_\mu$ by
$$
T_\mu(\omega) := \int_{\mathbb{H}^n} \langle \omega, d\mu \rangle = \sum_{i=1}^{2n} \int_{\mathbb{H}^n} \omega_i \, d\mu_i.
$$
Then $T_\mu$ is a Federer-Fleming current. Moreover,
$$
(T_\mu \llcorner \theta)(\varphi) = T_\mu(\varphi \theta) = \int_{\mathbb{H}^n} \langle \varphi \theta, d\mu \rangle = 0,
\qquad \forall \varphi \in \mathcal{D}(\mathbb{H}^n).
$$
Therefore $T_\mu$ has finite mass and is horizontal.
\section{The Collection of Horizontal Curves}
\begin{definition}
(Horizontal curve). An absolutely continuous curve $\gamma: [a, b] \to \mathbb{H}^n$ is called a horizontal curve if its tangent vector lies strictly within the horizontal bundle almost everywhere: $$\dot{\gamma}(t) \in H_{\gamma(t)}\mathbb{H}^n \quad \text{for a.e. } t \in [a, b].$$
The length of a horizontal curve is defined as
$$L(\gamma) := \int_a^b \langle \dot{\gamma}(t),\dot{\gamma}(t)\rangle^{1/2} \; dt.$$
\end{definition}

The Heisenberg-Carathéodory distance, $d_{CC}(x, y)$, between any two points $x, y \in \mathbb{H}^n$ is defined as the infimum of the lengths of all horizontal curves connecting them:$$d_{CC}(x, y) = \inf \left\{ L(\gamma) \mid \gamma: [a, b] \to \mathbb{H}^n \text{ is horizontal}, \, \gamma(a) = x, \, \gamma(b) = y \right\}.$$A curve $\gamma : [a,b] \to \mathbb{H}^n$ is said to be Lipschitz (specifically 1-Lipschitz) with respect to this metric if it satisfies:$$d_{CC}\left(\gamma(s),\gamma(t)\right)\leq |s-t| \quad \text{for all } s,t \in [a,b].$$According to Proposition 1.1 in Hajłasz and Tyson \cite[p. 3]{Zimmerman},  Lipschitz curves are necessarily horizontal. A 1-Lipschitz curve yields a curve charge $[\gamma]$ which acts on horizontal test fields as a curve integral
$$\langle [\gamma],\Phi\rangle = \int_{[a,b]}\langle \Phi(\gamma)(t),\dot{\gamma}(t)\rangle dt.$$
We can consider $[\gamma]$ as a (horizontal) vector measure on $\mathbb{H}^n$, and the components of $[\gamma]$ can be considered as the image measures of $\dot{\gamma}_k \mathcal{L}^1$ under $\gamma$, i.e. for $A \in \mathscr{B}(\mathbb{H}^n)$:
$$\gamma_\# (\dot{\gamma}_k \mathcal{L}^1)(A) = \int_{\{t: \gamma(t) \in A\}} \dot{\gamma}_k(t) d\mathcal{L}^1(t).$$
For open set $E \subset \mathbb{H}^n$,$$|[\gamma]|(E) \leq \int_{\{t: \gamma(t) \in E\}} |\dot{\gamma}|dt,\quad \operatorname{var}([\gamma])\leq L(\gamma).$$
This inequality is an equality if and only if
$$|[\gamma]|(A) = \int_{\{t: \gamma(t) \in A\}} |\dot{\gamma}(t)|dt,\quad \forall A \in \mathscr{B}(\mathbb{H}^n).$$
We can also calculate the horizontal divergence of $[\gamma]$. For all $\psi \in \mathcal{D}(\mathbb{H}^n,\mathbb{R})$,
$$\operatorname{div}_H([\gamma]) (\psi) = - \int_{[a,b]} \langle \nabla_H \psi(\gamma(t)),\dot{\gamma}(t)\rangle dt = -\int_a^b(d\psi)_{\gamma(t)}(\dot{\gamma}(t)) = \delta_{\gamma(a)}-\delta_{\gamma(b)}(\psi).$$

For a fixed $\ell > 0$ we denote
$$
K_{\ell}=\left\{\gamma:[0, \ell] \rightarrow \mathbb{H}^n: \gamma \text { is Lipschitz with }|\dot{\gamma}| \leq 1\text { $\mathcal{L}^1$-a.e.}\right\} .
$$
We define a homogeneous norm $\|\cdot\|_{\mathbb{H}^n}$ to be the distance to the identity element $e$:
$$\|x\|_{\mathbb{H}^n} = d_{C C}(x, e)$$
For paths in $C([0,\ell], \mathbb{H}^n)$, we define the uniform norm of a curve $\gamma$ as the supremum of the group norm along the path:$$||\gamma||_{\infty} = \sup_{t \in [0,\ell]} ||\gamma(t)||_{\mathbb{H}^n}.$$
The associated metric of this uniform norm is given by
$$d_\infty(\gamma_1,\gamma_2) = \sup_{t\in [0,\ell]} \|\gamma_1(t)^{-1} \cdot \gamma_2(t)\|_{\mathbb{H}^n}$$
By Definition 2.10 in Carfagnini \cite[p. 702]{Carfagnini2021OnTS}, $\left(C([0,\ell], \mathbb{H}^n),d_\infty\right)$ is a complete metric space. Furthermore, we remark that the Carnot-Carathéodory distance $d_{C C}$ and the distance induced by any homogeneous norm $\|\cdot\|_{\mathbb{H}^n}$ are bilipschitz equivalent. That is, there exist constants $c, C>0$ such that
$$
c d_{C C}(p, q) \leq\left\|p^{-1} \cdot q\right\|_{\mathbb{H}^n} \leq C d_{C C}(p, q) \quad \forall p, q \in \mathbb{H}^n.
$$
\begin{lemma}
The set of all 1-Lipschitz curves with length less or equal to $\ell$ in $\mathbb{H}^n$, denoted as $K_\ell$, is $\sigma$-compact subset of  $\left(C([0,\ell], \mathbb{H}^n),d_\infty\right)$.
\end{lemma}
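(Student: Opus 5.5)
We write $K_\ell=\bigcup_{R\in\mathbb{N}} K_\ell^R$, where
$$K_\ell^R:=\{\gamma\in K_\ell:\ \|\gamma(0)\|_{\mathbb{H}^n}\le R\},$$
and show that each $K_\ell^R$ is compact in $\left(C([0,\ell],\mathbb{H}^n),d_\infty\right)$. The tool is the Arzelà--Ascoli theorem for maps from the compact interval $[0,\ell]$ into the metric space $(\mathbb{H}^n,d_{CC})$. Since $\left(C([0,\ell],\mathbb{H}^n),d_\infty\right)$ is complete (Carfagnini), compactness of $K_\ell^R$ reduces to three things: equicontinuity, pointwise relative compactness, and closedness.

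\textbf{Step 1: equicontinuity and boundedness.} By definition, each $\gamma\in K_\ell$ is Lipschitz with $|\dot\gamma|\le 1$ a.e. It is horizontal by Proposition 1.1 of Hajłasz--Tyson. For $s<t$, the restriction $\gamma|_{[s,t]}$ is a horizontal curve joining $\gamma(s)$ to $\gamma(t)$, so
$$d_{CC}(\gamma(s),\gamma(t))\le L(\gamma|_{[s,t]})\le |t-s|.$$
Hence the family is uniformly $1$-Lipschitz, and therefore equicontinuous. For $\gamma\in K_\ell^R$ the triangle inequality gives $\|\gamma(t)\|_{\mathbb{H}^n}\le R+\ell$ for all $t$. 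So every $\gamma(t)$ lies in the closed $d_{CC}$-ball $\bar B(e,R+\ell)$. This ball is compact, because $d_{CC}$ induces the Euclidean topology on $\mathbb{R}^{2n+1}$ and its balls are bounded, closed sets there; equivalently, $(\mathbb{H}^n,d_{CC})$ is a complete, locally compact length space. Arzelà--Ascoli now shows that $K_\ell^R$ is relatively compact. Here we use that $d_\infty$ coincides with the uniform $d_{CC}$-distance, since $\|\gamma_1(t)^{-1}\gamma_2(t)\|_{\mathbb{H}^n}=d_{CC}(\gamma_1(t),\gamma_2(t))$ by left invariance. If one works with another homogeneous norm, the bilipschitz equivalence stated above gives the same topology.

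\textbf{Step 2: closedness.} This is the main point needing care. Let $\gamma_k\in K_\ell^R$ with $\gamma_k\to\gamma$ uniformly. Passing to the limit in $d_{CC}(\gamma_k(s),\gamma_k(t))\le|t-s|$ shows that $\gamma$ is $1$-Lipschitz for $d_{CC}$, and clearly $\|\gamma(0)\|_{\mathbb{H}^n}\le R$. It remains to deduce that $\gamma$ is horizontal with $|\dot\gamma|\le 1$ a.e. We apply Hajłasz--Tyson again: a $d_{CC}$-Lipschitz curve is horizontal. For the bound on the derivative, at a point $t$ of differentiability the horizontal velocity satisfies
$$|\dot\gamma(t)|=\lim_{h\to0}\frac{d_{CC}(\gamma(t),\gamma(t+h))}{|h|}\le 1.$$
This is the metric derivative identity for horizontal curves (Pansu differentiability). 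Alternatively, one can argue via the lower semicontinuity of length under uniform convergence, applied on each subinterval, which gives $L(\gamma|_{[s,t]})\le|t-s|$. Thus $\gamma\in K_\ell^R$, so $K_\ell^R$ is closed.

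Combining the two steps, each $K_\ell^R$ is compact, and $K_\ell=\bigcup_R K_\ell^R$ is $\sigma$-compact. We expect the identification $|\dot\gamma|=$ metric speed in Step 2 to be the step that needs a citation rather than a computation. Everything else is the standard Arzelà--Ascoli argument.
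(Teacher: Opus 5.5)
Your proof is correct and follows essentially the same route as the paper: the same exhaustion $K_\ell=\bigcup_m\{\gamma\in K_\ell:\|\gamma(0)\|_{\mathbb{H}^n}\le m\}$, equicontinuity from the uniform $1$-Lipschitz bound, pointwise boundedness by $m+\ell$ in the proper space $(\mathbb{H}^n,d_{CC})$, and closedness under uniform limits, combined via Arzel\`a--Ascoli. Your Step 2 is a little more careful than the paper's: the paper only checks that the limit curve is $1$-Lipschitz for $d_{CC}$ and tacitly identifies this with $|\dot\gamma|\le 1$ a.e., whereas you justify that identification via the metric-derivative (or lower-semicontinuity-of-length) argument.
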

\begin{proof}
Define $F_m = \left\{\gamma \in K_\ell: \|\gamma(0)\|_{\mathbb{H}^n} \leq m\right\}$. We need to show that $F_m$ is compact. By the Arzelà-Ascoli theorem, we need to show $F_m$ is equicontinuous in $\mathbb{H}^n$, closed and pointwise compact. Notice that the equicontinuity follows directly from the 1-Lipschitz condition. Let $\varepsilon > 0$ and choose $\delta = \varepsilon$. For any $s, t \in [0, \ell]$ with $|s-t| < \delta$, the 1-Lipschitz condition guarantees $d_{CC}(\gamma(s),\gamma(t)) \le |s-t| < \varepsilon$. Since $d_{\mathbb{H}^n}$ is equivalent to $d_{CC}$, this implies $F_m$ is uniformly equicontinuous in $(\mathbb{H}^n, d_{\mathbb{H}^n})$.

We show that $F_{m,x} = \left\{\gamma(x): \gamma \in F_m\right\}$ is bounded. Since $\gamma$ is 1-Lipschitz in $d_{CC}$, the distance traveled is at most $\ell$. For any $\gamma \in F_m$:$$d_{CC}(e, \gamma(x)) \le d_{CC}(e, \gamma(0)) + d_{CC}(\gamma(0), \gamma(x))\leq m + \ell.$$
Therefore $\|\gamma\|_{\mathbb{H}^n}$ is uniformly bounded. Since $F_{m,x}$ is bounded and $\mathbb{H}^n$ is a proper metric space, we conclude that $F_{m,x}$ is relatively compact in $\mathbb{H}^n$.

Finally, we show that $F_m$ is closed in $\left(C([0,\ell], \mathbb{H}^n),d_\infty\right)$ with respect to the $d_\infty$-topology. Let us consider a sequence $\{\gamma_n\}_{n\in \mathbb{N}} \subset F_m$ with a uniform limit $\gamma \in \left(C([0,\ell], \mathbb{H}^n),d_\infty\right)$ such that $$\lim_{n \to \infty} d_\infty(\gamma_n, \gamma)= 0.$$
We want to show $\gamma \in F_m$. But the uniform convergence implies the pointwise convergence, hence
$\|\gamma(0)\|_{\mathbb{H}^n} = \lim_{n \to \infty} \|\gamma_n(0)\|_{\mathbb{H}^n} \leq m$. Furthermore, since $d_{\mathbb{H}^n}$ and $d_{CC}$ are equivalent, 
$$\lim_{n \to \infty} d_{CC}(\gamma_n(t),\gamma(t)) = 0.$$
Hence
$$d_{CC}(\gamma(s),\gamma(t))=\lim_{n \to \infty} d_{CC}(\gamma_n(s),\gamma_n(t))\leq |s-t|.$$
We conclude that $\gamma \in F_m$. Therefore $F_m$ is closed. 

Because $F_m$ is compact for every $m \in \mathbb{N}$, and $K_\ell = \bigcup_{m=1}^\infty F_m$, it follows that $K_\ell$ is a countable union of compact sets. Therefore, $K_\ell$ is $\sigma$-compact.
\end{proof}

To compactify $K_\ell$, we consider the one-point compactification of the Heisenberg group $\mathbb{H}^n$:$$\hat{\mathbb{H}}^n = \mathbb{H}^n \cup \{\infty\}.$$This compactification can be explicitly constructed as a CR generalization of the stereographic projection, mapping the Heisenberg group—viewed as the boundary of a Siegel domain—to the unit sphere $S^{2n+1}$ \cite[p. 637]{balogh_uniformly_2012}. We define the Siegel domain $D \subset \mathbb{C}^{n+1}$ as:$$D = \left\{ (\zeta, \zeta_0) \in \mathbb{C}^n \times \mathbb{C} \mid \operatorname{Im} \zeta_0 - |\zeta|^2 > 0 \right\},$$where $|\zeta|^2 = \zeta \cdot \bar{\zeta}$ is the standard Euclidean norm in $\mathbb{C}^n$.The Heisenberg group $\mathbb{H}^n$ is the space of elements $(z, t) \in \mathbb{C}^n \times \mathbb{R}$ equipped with the group operation:$$(z, t)(z', t') = \left(z + z', t + t' + 2 \operatorname{Im}(z \cdot \bar{z}')\right).$$
The Heisenberg group $\mathbb{H}^n$ operates transitively on $D$ by
$$
\begin{aligned}
\mathbb{H}^n \times D & \rightarrow D \\
\left((z, t),\left(\zeta, \zeta_0\right)\right) & \mapsto\left(\zeta+z, \zeta_0+t+2 \mathrm{i} \zeta \cdot \bar{z}+\mathrm{i}|z|^2\right)
\end{aligned}
$$
This group action extends continuously to the boundary $\partial D$. By evaluating the action at the origin $(0,0) \in \partial D$, we obtain a natural identification between the Heisenberg group and $\partial D$. This identification is given by the embedding $\iota: \mathbb{H}^n \to \partial D$:
$$\iota(z, t) = (z, t + i|z|^2).$$
Let $B \subset \mathbb{C}^{n+1}$ be the unit ball, with its boundary sphere of dimension $2n+1$ defined as:$$\partial B = \left\{ (w, w_0) \in \mathbb{C}^n \times \mathbb{C} \mid |w|^2 + |w_0|^2 = 1 \right\}.$$The Cayley transformation $C : B \to D$, defined by$$C(w, w_0) = \left( \frac{iw}{1+w_0}, i \frac{1-w_0}{1+w_0} \right),$$is a holomorphic bijection. It extends to a continuous bijection between the boundaries, mapping $\partial B \setminus \{(0,-1)\} \to \partial D$. Its inverse, $C^{-1}: D \to B$, is given by:$$C^{-1}(\zeta, \zeta_0) = \left( \frac{2\zeta}{\zeta_0 + i}, \frac{i - \zeta_0}{\zeta_0 + i} \right).$$Finally, we compose the inverse Cayley transform with our boundary identification to define the map $F(z,t) = C^{-1}(\iota(z,t))$. This explicitly maps the Heisenberg group $\mathbb{H}^n$ to the punctured sphere $\partial B \setminus \{(0,-1)\}$:$$F(z, t) = \left( \frac{2z}{t + i(1+|z|^2)}, \frac{-t + i(1-|z|^2)}{t + i(1+|z|^2)} \right).$$By defining $F(\infty) = (0,-1) \in \partial B$, we smoothly extend this map from $\mathbb{H}^n$ to the one-point compactification $\hat{\mathbb{H}}^n$, thereby identifying the compactified Heisenberg group to the entire unit sphere $\partial B$.

Define $I : C([0,\ell], \mathbb{H}^n) \to C([0,\ell],\hat{\mathbb{H}}^n)$ by $I(\gamma) = F \circ \gamma$ where $C([0,\ell],\hat{\mathbb{H}}^n)$ is also given with a uniform topology. Let us consider a sequence $\{\gamma_k\}_{k \in \mathbb{N}} \subset K_\ell$ with a uniform limit $\gamma$. We analyze this limit by considering the behavior of the starting points $\gamma_k(0)$. Suppose the sequence of starting points $\gamma_k(0)$ does not converge to infinity, then by taking limits of both sides:
$$|\gamma_k(t)|\leq |\gamma_k(0)| + \ell \Rightarrow |\gamma(t)|\leq M + \ell.$$
Consequently, the limiting curve after inversion, $\hat{\gamma} = F \circ \gamma$, only takes finite values and lies entirely away from the pole $(0,-1)$. Therefore, $\hat{\gamma} \in I(K_\ell)$. Now suppose the sequence of starting points escapes to infinity, meaning $|\gamma_k(0)| \to \infty$ as $k \to \infty$, then
$$|\gamma_k(t)| \geq |\gamma_k(0)| - \ell.$$
The limiting curve after inversion $\hat{\gamma} = F \circ \gamma$ is therefore the constant curve at the pole $(0,-1)$, meaning $\hat{\gamma} \in \overline{I(K_\ell)} \setminus I(K_\ell)$. By the Arzelà–Ascoli theorem, $\hat{K_\ell} \subset C([0,\ell], S^{2n+1})$ is a compact subset. Hence we can apply the Riesz-Kakutani representation theorem on $C \left(\hat{K}_\ell\right)$ to find a regular finite Borel measure on $\hat{K_\ell}$.
\section{Liouville theorem on the Heisenberg group} 
Consider $\mathbb{H}^1=(\mathbb{R}^3,d_{CC})$, with horizontal bundle
$$H\mathbb{H}^1=\operatorname{span}\{X,Y\},$$
and let $Z=[X,Y]$, where
$$
X(x, y, z) :=\frac{\partial}{\partial x}-\frac{y}{2} \frac{\partial}{\partial z},\quad
Y(x, y, z) :=\frac{\partial}{\partial y}+\frac{x}{2} \frac{\partial}{\partial z}
$$
If $\phi \in C^1_H\left(\mathbb{H}^1, H \mathbb{H}^1\right)$, then
$$\phi(x) = \phi_1(x) X(x) + \phi_2(x) Y(x)$$
where $\phi_1,\phi_2: \mathbb{H}^1 \to \mathbb{R}$ satisfy
$$X(\phi_1), Y(\phi_1), X(\phi_2), Y(\phi_2)$$
being continuous functions.
However, this does not imply any regularity for the Lie derivative along $Z=[X,Y]$, since
$$[X,Y]\phi_1 = X\circ Y(\phi_1) - Y\circ X(\phi_1)$$
involves second-order mixed derivatives. Therefore, $C_H^1$ regularity alone is not sufficient for an ODE uniqueness statement based on the Lie bracket structure. To apply the Carath\'eodory theorem \cite[p. 53]{Agrachev_Barilari_Boscain_2019} and obtain uniqueness for the associated flow (Cauchy problem), one needs continuity of all relevant directional derivatives, including those generated by commutators. A natural sufficient assumption is $\phi \in C_H^2\left(\mathbb{H}^1,H\mathbb{H}^1\right)$.
\begin{remark}
It is worth noting that, in general, a Lipschitz horizontal vector field on a homogeneous group does not guarantee uniqueness for the associated Cauchy problem \cite{Magnani2016OnLV}. The main issue is that intrinsic (horizontal) Lipschitz regularity does not imply Euclidean Lipschitz regularity of the full vector field. As shown by Magnani and Trevisan, uniqueness can be recovered only under additional structural assumptions (for instance, in the equilibrium-point setting or when the field is constrained to an involutive submodule of the horizontal bundle) \cite{Magnani2016OnLV}.
\end{remark}
We further assume that
$$|\phi(x)|_x \leq c\bigl(1 + \|x\|_{\mathbb{H}^1}\bigr)$$
for some constant $c > 0$ and all $x \in \mathbb{H}^1$, where the homogeneous norm is defined by
$$
\left\|\left(x_1, x_2, x_3\right)\right\|_{\mathbb{H}^1}:=\left(\left(\left|x_1\right|^2+\left|x_2\right|^2\right)^2+\left|x_3\right|^2\right)^{1 / 4}
$$
It is straightforward to show that $\phi$ is a complete vector field. Consider a smooth curve $\gamma: I \to \mathbb{H}^1$ solving the Cauchy problem
$$\dot{\gamma}(t) = \phi(\gamma(t)).$$
Write
$$\gamma(t) = (h(t),z(t)) = (x_1(t),x_2(t),x_3(t)),$$
with
$$\dot{h}(t) = \bigl(\phi_1(\gamma(t)),\phi_2(\gamma(t))\bigr), \qquad \dot{z}(t) = \frac{1}{2}\,\omega\bigl(h(t),\dot{h}(t)\bigr),$$
where $\omega$ is the standard symplectic form. Assume that $I$ is a finite interval. We have bounds:
$$|\dot{h}(t)| = |\phi(\gamma(t))| \leq c\bigl(1 + \|\gamma(t)\|_{\mathbb{H}^1}\bigr),$$
and
$$|\dot{z}(t)| = \frac{1}{2}\bigl|\omega\bigl(h(t),\dot{h}(t)\bigr)\bigr|\leq \frac{1}{2}|h(t)|\,|\dot{h}(t)|\leq \frac{c}{2} \left(\|\gamma(t)\|^2_{\mathbb{H}^1} + \|\gamma(t)\|_{\mathbb{H}^1}\right).$$
Define $\mathbf{S}(x) = \|x\|_{\mathbb{H}^1}^4$.
Then
\begin{align*}
\left|\frac{d}{dt}\mathbf{S}(\gamma(t))\right| &\leq \left|4|h(t)|^2 \langle h(t),\dot{h}(t)\rangle + 2 z(t) \dot{z}(t)\right|\\
&\leq 4c \|\gamma(t)\|^3_{\mathbb{H}^1}\bigl(1 + \|\gamma(t)\|_{\mathbb{H}^1}\bigr) + c \|\gamma(t)\|_{\mathbb{H}^1}\left(\|\gamma(t)\|_{\mathbb{H}^1}^2 + \|\gamma(t)\|_{\mathbb{H}^1}\right)\\
&\leq K\bigl(\|\gamma(t)\|^4_{\mathbb{H}^1}+1\bigr)
\end{align*}
for some constant $K > 0$ (depending on the finite time interval $I$). Using Grönwall's inequality, we obtain
$$\mathbf{S}(t)\leq \bigl(\mathbf{S}(0) + 1\bigr)e^{Kt}-1.$$
By the escape lemma \cite[Lemma 9.19]{Johnlee}, we conclude that $\phi$ is complete.
Therefore, this setup has a globally defined flow $u \in C^1(\mathbb{R} \times \mathbb{H}^n)$, where $u(\cdot, x)$ is the unique solution to the initial value problem $\partial_t u(t,x) = \phi(u(t,x))$.
\begin{proposition}
(Liouville theorem). Let $\phi \in C_H^2(\mathbb{H}^n, H\mathbb{H}^n)$ be a horizontal vector field with sublinear growth. Let $\varrho \in \mathcal{M}(\mathbb{H}^n)$ be a a locally finite Borel measure on $\mathbb{H}^n$, and assume that $\operatorname{div}_H(\phi\ \varrho)=0$. Then $\varrho$ is invariant under the flow $u$ generated by $\phi$; namely, $\varrho_t := u(t,\cdot)_\#\varrho$ is independent of $t \in \mathbb{R}$.
\end{proposition}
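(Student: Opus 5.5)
We will show that $\frac{d}{dt}\langle \varrho_t, f\rangle = 0$ for every test function $f \in \mathcal{D}(\mathbb{H}^n,\mathbb{R})$. Since $\varrho$ is locally finite and a Radon measure on $\mathbb{H}^n$ is determined by its action on $\mathcal{D}(\mathbb{H}^n)$, this gives $\varrho_t=\varrho_0=\varrho$ for all $t$. By definition of the push-forward,
$$\langle \varrho_t, f\rangle=\int_{\mathbb{H}^n} f(u(t,x))\,d\varrho(x).$$
The natural move is to differentiate in $t$ under the integral sign. By the chain rule and the fact that $u$ is the flow of the horizontal field $\phi$, we get
$$\partial_t f(u(t,x))=(df)_{u(t,x)}\big(\phi(u(t,x))\big)=\langle \nabla_H f,\phi\rangle(u(t,x)).$$
Only the horizontal gradient appears here, because $\phi$ is horizontal.

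Rather than differentiating $f\circ u(t,\cdot)$ directly, we use the group property $u(t+s,\cdot)=u(s,\cdot)\circ u(t,\cdot)$. This yields
$$\frac{d}{dt}\langle\varrho_t,f\rangle=\lim_{s\to 0}\frac1s\int\big(f(u(s,y))-f(y)\big)\,d\varrho_t(y)=\int \langle\nabla_H f,\phi\rangle\,d\varrho_t .$$
This only requires differentiating at $s=0$ with the fixed smooth test function $f$.

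To justify the limit we argue by domination. Since $f$ has compact support $K$ and the flow is continuous on $[-1,1]\times\mathbb{H}^n$, the set
$$\{y:\ u(s,y)\in K \text{ for some } |s|\le1\}=\bigcup_{|s|\le 1}u(-s,K)$$
is compact. On this set the difference quotients are bounded by $\|\nabla_H f\|_\infty\sup_{\tilde K}|\phi|$, so dominated convergence with respect to the locally finite measure $\varrho_t$ applies. The same compactness argument shows that $\varrho_t$ is again locally finite, because $u(t,\cdot)$ is a homeomorphism with continuous inverse $u(-t,\cdot)$.

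The key step, and the main obstacle, is to transfer the divergence condition from $\varrho$ to $\varrho_t$. Applying the hypothesis $\operatorname{div}_H(\phi\,\varrho)=0$ to the function $g_t:=f\circ u(t,\cdot)$ gives
$$\int\langle\nabla_H g_t,\phi\rangle\,d\varrho=0 .$$
By the semigroup identity, $\langle\nabla_H g_t,\phi\rangle(x)=\frac{d}{ds}\big|_{s=0}f(u(t+s,x))=\langle \nabla_H f,\phi\rangle(u(t,x))$. Integrating against $\varrho$ is precisely the derivative computed above. Hence $\frac{d}{dt}\langle\varrho_t,f\rangle=0$.

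The delicate point is that $g_t$ must be an admissible test function. It has compact support, since $\operatorname{supp}g_t=u(-t,\operatorname{supp}f)$. For regularity we need $g_t$ to be at least $C^1_H$ with $X_ig_t, Y_ig_t$ continuous, so that the distributional identity extends from $\mathcal{D}$ to $g_t$. Here we invoke the assumption $\phi\in C^2_H$, which the preceding discussion uses to obtain uniqueness and a $C^1$ flow $u$. Given that, $g_t\in C^1_c$, and the identity extends to $C^1_c$ functions by mollification. The reason is that $\phi\varrho$ is a measure on compacts and $\nabla_H(g_t*\eta_\varepsilon)\to\nabla_H g_t$ uniformly on compacts with supports in a fixed compact set.

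Finally, $t\mapsto\langle\varrho_t,f\rangle$ is differentiable with zero derivative, hence constant. Therefore $\varrho_t=\varrho$ for all $t\in\mathbb{R}$.
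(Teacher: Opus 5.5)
Your proof is correct and follows essentially the same route as the paper. Both differentiate $t\mapsto\int f(u(t,x))\,d\varrho(x)$ under the integral using compact-support domination, then identify the derivative through the transport identity $\partial_t (f\circ u)(t,\cdot)=\langle\nabla_H (f\circ u(t,\cdot)),\phi\rangle$, and conclude by testing $\operatorname{div}_H(\phi\varrho)=0$ against $f\circ u(t,\cdot)$. Your mollification step, which shows this merely $C^1_c$ function is an admissible test function, is a point the paper passes over silently.
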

\begin{proof}
The proof will be based on \cite{rodríguezarenas2024smirnovdecompositionsvectorfields}. For $\varphi \in \mathcal{D}(\mathbb{H}^n, \mathbb{R})$, it is enough to show that$$\int_{\mathbb{H}^n} \varphi d\varrho_t = \int_{\mathbb{H}^n} \varphi(u(t,x))d\varrho(x)$$is independent of $t$. Define $\psi(t,x) = \varphi(u(t,x))$. Because $\phi$ has sublinear growth, the flow $u$ is a globally defined diffeomorphism, and the flow equation yields $\psi(t,u(-t,x)) = \varphi(x)$. Taking the total derivative with respect to $t$ gives$$0 = \frac{d}{dt} \big(\psi(t,u(-t,x))\big) = (\partial_t\psi)(t,u(-t,x)) - \langle \nabla \psi(t,u(-t,x)),\phi(u(-t,x))\rangle.$$Since the gradient decomposes into a horizontal and a vertical part, $\nabla f = \nabla_H f + \nabla_V f$, and $\phi$ is a horizontal vector field, evaluating the above at $y = u(-t,x)$ yields the transport equation$$\partial_t \psi(t,y) = \langle \nabla_H \psi(t,y),\phi(y)\rangle,$$where $\langle \cdot, \cdot \rangle$ is the inner product on the horizontal bundle. We fix a compact interval $J = [-m,m]$. For $t \in J$, we have$$\operatorname{supp}(\psi(t,\cdot)) \subseteq \{x \in \mathbb{H}^n: u(t,x) \in \operatorname{supp}(\varphi)\} = u(-t,\operatorname{supp}(\varphi)) \subseteq u(J \times \operatorname{supp}(\varphi)).$$Notice that $K = u(J \times \operatorname{supp}(\varphi))$ is compact. Since $\partial_t \psi(t,\cdot)$ is continuous, $|\partial_t \psi(t,\cdot)|$ is uniformly bounded on $K$. We can thus differentiate with respect to $t \in J$ under the integral sign:$$\partial_t \int \psi(t,x) d\varrho(x) = \int \partial_t \psi(t,x) d\varrho(x) = \int \langle \nabla_H \psi(t,x),\phi(x)\rangle d\varrho(x) = - \langle\operatorname{div}_H(\phi \varrho),\psi(t,\cdot)\rangle.$$By hypothesis, $\operatorname{div}_H(\phi \varrho) = 0$ in the sense of distributions, so the derivative is zero, proving the claim.
\end{proof}
\section{Smirnov decomposition of Horizontal Charges}
We now state and prove the Smirnov decomposition of a horizontal charge with vanishing horizontal divergence.
\begin{theorem}\label{thm: divergence free}
(Smirnov's theorem for $\operatorname{div}\mu = 0$). For every $\ell > 0$ and every horizontal charge $\mu = (\alpha_1q,\dots,\alpha_mq)$ where $\alpha = (\alpha_1,\dots,\alpha_m) : \mathbb{H}^n \to H \mathbb{H}^n$ is a $q$-measurable horizontal vector field with $q$ a Radon measure with $\operatorname{div}_H \mu = 0$, there is a finite positive measure $\nu$ on $K_\ell$ with
$$\mu = \int_{K_\ell} [\gamma] d\nu(\gamma), \quad \left\|\mu\right\|_{\mathcal{M}} = \int_{K_\ell} \left\|[\gamma]\right\|d\nu(\gamma)$$
and $\nu$-almost every $\gamma \in K_\ell$ has values in $\operatorname{supp}(\mu)$ and satisfies $\|[\gamma]\| = L(\gamma) = \ell$.
\end{theorem}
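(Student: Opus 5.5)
We follow the scheme of Arenas and Wengenroth: first prove the decomposition for smooth, nonvanishing fields via the flow, then pass to general $\mu$ by regularization and a weak$^*$ compactness argument on the compactified curve space $\hat K_\ell$.

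\emph{Step 1 (regular case).} Suppose $\mu=\phi\,\varrho$, where $\phi\in C_H^2(\mathbb{H}^n,H\mathbb{H}^n)$ has sublinear growth and $|\phi|_x=1$ on a neighbourhood of the relevant set, $\varrho$ is a finite positive measure, and $\operatorname{div}_H(\phi\varrho)=0$. Let $u$ be the global flow of $\phi$ constructed before the Liouville theorem. Define $\Gamma:\mathbb{H}^n\to K_\ell$ by $\Gamma(x)=u(\cdot,x)|_{[0,\ell]}$, and set
\[
\nu:=\tfrac1\ell\,\Gamma_\#\varrho .
\]
Since $|\dot\gamma|=|\phi|=1$, each such curve is $1$-Lipschitz with $L(\gamma)=\ell$. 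For a test field $\Phi$, Fubini and the Liouville proposition ($u(t,\cdot)_\#\varrho=\varrho$) give
\[
\int\langle[\gamma],\Phi\rangle\,d\nu=\frac1\ell\int_0^\ell\!\!\int\langle\Phi(u(t,x)),\phi(u(t,x))\rangle\,d\varrho(x)\,dt=\int\langle\Phi,\phi\rangle\,d\varrho=\langle\mu,\Phi\rangle .
\]
The same computation with $|\Phi|$ shows $\int|[\gamma]|\,d\nu=|\mu|$ as measures, provided $|\phi|\equiv1$. Trajectories stay in $\operatorname{supp}\varrho$ because $\varrho$ is flow invariant and the support is closed.

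\emph{Step 2 (regularization).} For general $\mu=\alpha q$ with $\operatorname{div}_H\mu=0$, mollify by group convolution with an approximate identity, $\mu_\varepsilon=\mu*\eta_\varepsilon$. Right convolution commutes with the left-invariant fields $X_i,Y_i$, so $\operatorname{div}_H\mu_\varepsilon=0$, and $\|\mu_\varepsilon\|\le\|\mu\|$. Two defects remain. First, $|\mu_\varepsilon|$ may vanish, so it cannot be normalized directly. Following Smirnov and Arenas--Wengenroth, we add a small solenoidal smooth horizontal field, or take $\varrho_\varepsilon=|\mu_\varepsilon|+\varepsilon\,\omega$ for a suitable positive smooth density $\omega$, and write $\mu_\varepsilon=\phi_\varepsilon\varrho_\varepsilon$ with $|\phi_\varepsilon|\le1$. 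The mismatch in the mass identity then vanishes as $\varepsilon\to0$. Second, $\phi_\varepsilon$ need not have unit speed. To handle this we parametrize by arclength, or note that Step 1 only needs $|\phi|\le1$ to land in $K_\ell$, with mass defect $\int(1-|\phi_\varepsilon|)\,d\varrho_\varepsilon\to0$. Sublinear growth and $C^2_H$ regularity of $\phi_\varepsilon$ follow from the smoothness of the mollifier and the boundedness of $\phi_\varepsilon$.

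\emph{Step 3 (limit).} Regard the measures $\nu_\varepsilon$ on the compact space $\hat K_\ell$ of Section 3. They have uniformly bounded mass $\le\|\mu\|/\ell+o(1)$, so by Riesz--Kakutani and Banach--Alaoglu we may extract a weak$^*$ limit $\hat\nu$. For fixed $\Phi\in\mathcal D$, the map $\gamma\mapsto\langle[\gamma],\Phi\rangle$ is not obviously continuous in $d_\infty$, because it involves $\dot\gamma$. We rewrite it as the line integral $\int_\gamma\Phi^\flat$ of the $1$-form $\Phi^\flat$. Approximating this by Riemann sums, or by Stokes-type estimates, gives continuity on $K_\ell$. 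Curves at the point $\infty$ are constant and contribute nothing, since $\Phi$ has compact support. Passing to the limit yields $\mu=\int[\gamma]\,d\nu$. Lower semicontinuity of both mass and length, combined with $\|\mu\|\le\int\|[\gamma]\|\,d\nu\le\int L(\gamma)\,d\nu\le\ell\,\nu(K_\ell)\le\|\mu\|$, forces equality throughout. Hence $\nu$-a.e. $\|[\gamma]\|=L(\gamma)=\ell$, and $\nu$ charges neither the point at infinity nor curves leaving $\operatorname{supp}\mu$; the latter holds because the $\operatorname{supp}\mu_\varepsilon$ shrink to $\operatorname{supp}\mu$.

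\emph{Main obstacle.} The hard step is making the mass equality survive the limit. This requires controlling the loss of normalization where $|\mu_\varepsilon|$ is small, and proving continuity of $\gamma\mapsto\langle[\gamma],\Phi\rangle$ on $K_\ell$ in the Heisenberg setting. I would first try to establish that continuity via the horizontal line integral $\int\Phi^\flat$ and uniform Lipschitz bounds, using that $1$-Lipschitz curves are horizontal.
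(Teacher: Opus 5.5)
Your architecture is the paper's. Both arguments use the flow of a mollified, normalized field, invariance of $\varrho$ from the Liouville proposition, $\nu_\varepsilon=\frac1\ell\Gamma_\#\varrho_\varepsilon$, a weak$^*$ limit on $\hat K_\ell$, and the chain of inequalities. The real difference is the normalization. The paper mollifies the total variation rather than taking the norm of the mollified field: $\varrho_\varepsilon=(|\mu|*J_\varepsilon)h$ and $\phi=(\mu*J_\varepsilon)/(|\mu|*J_\varepsilon)$. Then:
\begin{itemize}
\item $|\phi|\le1$ is automatic from $|\mu*J_\varepsilon|\le|\mu|*J_\varepsilon$;
\item the denominator is smooth;
\item $\varrho_\varepsilon(\mathbb{H}^n)=\|\mu\|$ exactly.
\end{itemize}
So the obstacle you single out (loss of normalization, unit speed, defect control) never arises. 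Testing the weak$^*$ convergence against the constant $1$ on $\hat K_\ell$ gives $\ell\,\nu(\hat K_\ell)=\|\mu\|$. The chain $\|\mu\|\le\int\|[\gamma]\|\,d\nu\le\int L(\gamma)\,d\nu\le\ell\,\nu(K_\ell)\le\ell\,\nu(\hat K_\ell)=\|\mu\|$ then forces equality throughout, including $\nu(\hat K_\ell\setminus K_\ell)=0$. No lower semicontinuity is involved.

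Your choice $\varrho_\varepsilon=|\mu*\eta_\varepsilon|+\varepsilon\omega$ also closes this chain, since its mass is at most $\|\mu\|+\varepsilon\|\omega\|$. Two of your claims about it are off, though. First, $\phi_\varepsilon$ is not $C^2_H$, because the density $|\mu*\eta_\varepsilon|$ is not smooth at zeros of $\mu*\eta_\varepsilon$. It is still Euclidean $C^1$, which is all the flow and Liouville argument actually use. Second, arclength reparametrization is not an option, because the Fubini computation relies on $\partial_t u=\phi(u)$ and on invariance of $\varrho$ under $u(t,\cdot)$. Your $+\varepsilon\omega$ does buy something the paper skips: $|\mu|*J_\varepsilon$ can vanish when $J$ has compact support. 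The cleanest denominator is therefore $|\mu|*J_\varepsilon+\varepsilon\omega$.

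The support claim is where your sketch has an actual gap. Your $\varrho_\varepsilon$ has full support, so ``trajectories stay in $\operatorname{supp}\varrho$'' yields nothing, and ``the supports of $\mu_\varepsilon$ shrink'' is not an argument by itself. The paper instead upgrades the scalar chain to a measure identity:
\begin{itemize}
\item From $\mu=\int[\gamma]\,d\nu$ one gets $|\mu|(E)\le\int|[\gamma]|(E)\,d\nu$ for open $E$.
\item Equality of total masses then gives $|\mu|=\int|[\gamma]|\,d\nu$.
\item For $\nu$-a.e.\ $\gamma$, $L(\gamma)=\ell$ forces $|\dot\gamma|=1$, and $\|[\gamma]\|=L(\gamma)$ forces $|[\gamma]|=\gamma_\#(\mathcal{L}^1\llcorner[0,\ell])$.
\item Taking $E=(\operatorname{supp}\mu)^c$ shows that the relatively open sets $\gamma^{-1}(E)$ are null, hence empty.
\end{itemize}

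For continuity of $\gamma\mapsto\langle[\gamma],\Phi\rangle$ on $\hat K_\ell$, the paper uses weak $L^2$ compactness of the $\dot\gamma_j$ and identifies the limit. Your Riemann-sum route is a valid and arguably simpler alternative, but you must state the uniform estimate. Since the horizontal coefficients of $\dot\gamma$ are the Euclidean derivatives of the first $2n$ coordinates, one has $|S_m(\gamma)-\langle[\gamma],\Phi\rangle|\le\ell\,\omega_\Phi(\ell/m)$ for all $\gamma\in K_\ell$, where $\omega_\Phi$ is the modulus of continuity of $\Phi$. So the continuous $S_m$ converge uniformly. Pointwise convergence alone, as in the paper's remark, gives only measurability.
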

\begin{definition}
(Definition 2.9, \cite{MONTEFALCONE2007453}). Let $\nabla$ be the left-invariant Levi-Civita connection on $\mathbb{H}^n$ associated with
\[
g : T\mathbb{H}^n \times T\mathbb{H}^n \to \mathbb{R},
\qquad
T\mathbb{H}^n = \operatorname{Span}_{\mathbb{R}}\{X_1,\dots,X_n,Y_1,\dots,Y_n,Z\}.
\]
Moreover, for $V, W \in \mathbf{C}^{\infty}(\mathbb{H}^n, H\mathbb{H}^n)$, we define
\[
\nabla_V^H W := p_H(\nabla_V W).
\]
The operator $\nabla^H$ is called the horizontal connection.
\end{definition}
It is straightforward to verify that $\nabla^H$ is flat, compatible with the sub-Riemannian metric
\[
g_H : H\mathbb{H}^n \times H\mathbb{H}^n \to \mathbb{R},
\]
and torsion-free. These properties follow directly from the definition of $\nabla^H$ and from the corresponding properties of the Levi-Civita connection $\nabla$. Therefore, the global left-invariant frame $\{X_1,\dots,X_n,Y_1,\dots,Y_n\}$ allows us to define uniformly continuous horizontal vector fields on $\mathbb{H}^n$ unambiguously.
\begin{definition}
(Uniform Continuity on Heisenberg Group). We say that a horizontal vector field $V : \mathbb{H}^n \to H\mathbb{H}^n$ is uniformly continuous if there exists a modulus of continuity $\omega$ such that for all $p, q \in \mathbb{H}^n$, the Euclidean distance between their coefficient vectors is bounded by a modulus of convergence $\omega_V$:
$$\left(\sum_{k=1}^{2n}\left|V^k(p)-V^k(q)\right|^2\right)^{1/2}\leq \omega_V\left(d_{CC}(p,q)\right).$$
\end{definition}
\begin{remark}
To ensure that Smirnov's integral is well defined, it is enough to prove the Borel measurability of the map
$$
\gamma \mapsto \langle [\gamma],\varphi \rangle,
\qquad
\varphi \in \mathcal{D}(\mathbb{H}^n, H\mathbb{H}^n).
$$
For clarity, we present the argument in $\mathbb{H}^1$ (the general case is identical, with $2n$ horizontal components). Write
$$
\varphi=\varphi_1 X+\varphi_2 Y,
\qquad
\dot\gamma=\dot x\,X_{\gamma}+\dot y\,Y_{\gamma}.
$$
For $m\in\mathbb{N}$, let $t_k=k\ell/m$ ($k=0,\dots,m$), and define
$$
S_m(\gamma):=\sum_{k=1}^{m}\left\langle
\varphi(\gamma(t_k)),
\begin{pmatrix}
x(t_k)-x(t_{k-1})\\
y(t_k)-y(t_{k-1})
\end{pmatrix}
\right\rangle_{\gamma(t_k)} = \sum_{k=1}^m \langle \varphi(\gamma(t_k)),\Delta \gamma_k \rangle.
$$
Since $\gamma$ is absolutely continuous and horizontal (in particular, 1-Lipschitz in $d_{CC}$), the Riemann sums $S_m(\gamma)$ converge to
$$
\int_0^{\ell}\langle \varphi(\gamma(t)),\dot\gamma(t)\rangle\,dt
=\langle[\gamma],\varphi\rangle
$$
as $m\to\infty$.
For fixed $m$, the map $\gamma\mapsto S_m(\gamma)$ is continuous on $K_\ell$ with respect to $\|\cdot\|_\infty$. Indeed, $\varphi:\mathbb{H}^1\to H\mathbb{H}^1$ is bounded and uniformly continuous. Let $M:=\sup_{p\in\mathbb{H}^1}\|\varphi(p)\|$ and let $\omega_\varphi$ be a modulus of continuity for $\varphi$. Then
$$
\begin{aligned}
|S_m(\gamma)-S_m(\eta)|
&=\left|\sum_{k=1}^{m}\langle\varphi(\gamma_k),\Delta\gamma_k\rangle_{\gamma_k}
-\sum_{k=1}^{m}\langle\varphi(\eta_k),\Delta\eta_k\rangle_{\eta_k}\right|\\
&\leq \sum_{k=1}^m\left|\left\langle \mathbf{\Pi}\varphi(\gamma_k),\mathbf{\Pi}\Delta\gamma_k-\Delta\eta_k\right\rangle_{\eta_k}\right|
+\sum_{k=1}^m\left|\left\langle \mathbf{\Pi}\varphi(\gamma_k)-\varphi(\eta_k),\Delta\eta_k\right\rangle_{\eta_k}\right|\\
&\leq M\sum_{k=1}^m\|\mathbf{\Pi}\Delta\gamma_k-\Delta\eta_k\|_{\eta_k}
+\sum_{k=1}^m\omega_\varphi\!\left(d_{CC}(\gamma_k,\eta_k)\right)\,\|\Delta\eta_k\|_{\eta_k}.
\end{aligned}
$$
Here $\mathbf{\Pi}$ denotes left translation from $\gamma_k$ to $\eta_k$, which preserves horizontal components. Using
$$
\|\mathbf{\Pi}\Delta\gamma_k-\Delta\eta_k\|_{\eta_k}
\leq 2\,d_\infty(\gamma,\eta),
\qquad
 d_{CC}(\gamma_k,\eta_k)\le d_\infty(\gamma,\eta),
$$
we obtain
$$
|S_m(\gamma)-S_m(\eta)|
\leq (2mM)\,d_\infty(\gamma,\eta)
+\omega_\varphi\!\bigl(d_\infty(\gamma,\eta)\bigr)
\sum_{k=1}^{m}\|\Delta\eta_k\|.
$$
Here $L_m(\eta):=\sum_{k=1}^{m}\|\Delta\eta_k\|$ is finite for fixed $\eta$ and $m$. Hence, as $d_\infty(\gamma,\eta)\to0$, the right-hand side tends to $0$, proving continuity of $S_m$. Since
$$
\langle [\gamma],\varphi\rangle=\lim_{m\to\infty}S_m(\gamma)
$$
is a pointwise limit of continuous functions, it is Borel measurable.
\end{remark}
We now begin the proof of Theorem \ref{thm: divergence free},
namely Smirnov's theorem on the Heisenberg group.
\begin{proof}
Let $\mu =(\mu_1,\dots,\mu_m)$ be a divergence-free charge satisfying $\operatorname{div}_H \mu = 0$, and let $\ell > 0$. Let $J: \mathbb{H}^n \to \mathbb{R}$ be a mollifier, i.e.
$J \in C^\infty_c(\mathbb{H}^n)$, $\operatorname{supp}(J) \subset B_1(0)$, and $\int_{\mathbb{H}^n}J(x)\, dx = 1$. Define the convolution $\tilde{\mu} = \mu * J = (\mu_1 * J,\dots, \mu_m * J)$, which is a smooth horizontal vector field satisfying $|\mu* J|\leq |\mu|* J$ on $\mathbb{H}^n$. Define
$$
\phi = \frac{\mu * J}{|\mu|* J}.
$$
Let $\varrho = (|\mu|*J)\cdot h$, where $h$ is the Haar measure on $\mathbb{H}^n$. This is a finite measure, since it is the convolution of $|\mu|$ with the probability measure $J\cdot h \in \mathcal{P}(\mathbb{H}^n)$. Furthermore,
$$\operatorname{div}_H(\phi \cdot \varrho) = \operatorname{div}_H((\mu * J) h)= (\operatorname{div}_H(\mu)*J)\cdot h = 0.$$
Therefore we can apply Liouville's theorem to the flow generated by $\phi$ and the measure $\varrho$.

We now show that $\tilde{\mu} =\mu * J$ decomposes into the curves $\gamma_x :[0,\ell] \to \mathbb{H}^n$, $t \mapsto u(t,x)$. Indeed, for $\varphi \in \mathcal{D}(\mathbb{H}^n, H\mathbb{H}^n)$, the invariance of $\varrho$ under $u(t,\cdot)$ yields
\begin{align*}
\int_{\mathbb{H}^n} [\gamma_x](\varphi)d\varrho(x) &= \int_{\mathbb{H}^n} \int_0^\ell \langle \varphi(u(t,x)),\partial_t u(t,x)\rangle dt d\varrho(x)\\
&=\int_0^\ell \int_{\mathbb{H}^n} \langle \varphi(u(t,x)),\phi(u(t,x))\rangle d\varrho(x) dt\\
&=\int_0^\ell \int_{\mathbb{H}^n} \langle \varphi, \phi\rangle d\varrho dt = \ell \int_{\mathbb{H}^n} \langle \varphi,\phi\rangle(|\mu|*J)dh\\
&= \ell \langle \tilde{\mu}, \varphi \rangle.
\end{align*}
Since $|\phi|\leq 1$, the above identity shows that $\gamma_x \in K_\ell$. Define a map $\Gamma: \mathbb{H}^n \rightarrow K_{\ell}$ by $x \mapsto \gamma_x$. Let $g: K_{\ell} \rightarrow \mathbb{R}$ be a $\Gamma_\# \varrho$-measurable function. Then
$$
\int_{K_{\ell}} g(\gamma) d\left[\frac{\Gamma_{\#} \varrho}{\ell}\right](\gamma)=\int_{\mathbb{H}^n} g\left(\gamma_x\right) \frac{1}{\ell} d \varrho(x) .
$$
Taking $g(\cdot)=\langle[\cdot], \varphi\rangle$, we conclude that $\nu\left(K_{\ell}\right)=\operatorname{var}(\mu * J) / \ell$ and
$$
\tilde{\mu}=\int_{K_{\ell}}[\gamma] d \nu(\gamma).
$$
For $\varepsilon > 0$ we set $J_\varepsilon(x) = \frac{1}{\varepsilon^4}J(\delta_{1/\varepsilon}(x))$, where $\delta_{1/\varepsilon}$ is the dilation acting on the group element $x$:
$$\delta_{\lambda}(x_1,\dots,x_{2n},x_{2n+1}) = (\lambda x_1,\dots,\lambda x_{2n}, \lambda^2 x_{2n+1}).$$
Then $\langle \mu * J_\varepsilon, \varphi\rangle \to \langle \mu, \varphi\rangle$ for all $\varphi \in \mathcal{D}(\mathbb{H}^n, H\mathbb{H}^n)$, and we obtain measures $\{\nu_\varepsilon\}$ on $K_\ell$ with $\nu_\varepsilon(K_\ell)= \operatorname{var}(\mu * J_\varepsilon)/\ell$ such that
$$\mu * J_\varepsilon = \int_{K_\ell}[\gamma]d\nu_\varepsilon(\gamma).$$
We can consider $\nu_\varepsilon$ as measures on $\hat{K}_\ell$ with $\nu_\varepsilon(\{\infty\}) = 0$. Since $\{\nu_\varepsilon\}$ is uniformly bounded by $\operatorname{var}(\mu)/\ell$, by Banach-Alaglou's theorem, there exists a subsequence $\nu_{\varepsilon_k} \to \nu$ in the weak-$*$ topology:
$$\int f d\nu_{\varepsilon_k} \to \int f d\nu,\quad \forall f \in C(\hat{K}_\ell).$$

For all $\varphi \in \mathcal{D}(\mathbb{H}^n, H\mathbb{H}^n)$, we want to show
$$\langle \mu,\varphi \rangle = \int_{K_\ell} [\gamma](\varphi) d\nu(\gamma).$$
Define the map $f: \hat{K}_\ell \to \mathbb{R}$ by $f(\gamma) = [\gamma](\varphi)$. It suffices to prove that $f$ is continuous. Note that if $\gamma$ is the constant curve at $\infty$, then $f(\gamma) = 0$ because $\varphi$ has compact support in $\mathbb{H}^n$. To prove continuity, let $\{\gamma_j\} \subset \hat{K}_\ell$ be a sequence converging uniformly to $\gamma$. We must show that $[\gamma_j](\varphi) \to [\gamma](\varphi)$. By Urysohn's subsequence principle, the sequence $\{[\gamma_j](\varphi)\}$ converges to $[\gamma](\varphi)$ if and only if every subsequence contains a further subsequence converging to $[\gamma](\varphi)$. Since every subsequence of $\{\gamma_j\}$ still converges uniformly to $\gamma$, it is enough to prove the following: for every sequence $\{\gamma_j\}$ converging uniformly to $\gamma$, there exists a subsequence $\gamma_{j_k}$ such that $[\gamma_{j_k}](\varphi) \to [\gamma](\varphi)$.

Suppose first that $\gamma_j$ converges to the constant curve at $\infty$. Then $\varphi \circ \gamma_j = 0$ for all sufficiently large $j$, so $[\gamma_j](\varphi)=0$ eventually.

Otherwise, the sequence $\{\gamma_j\}$ is uniformly bounded. Since $|\dot{\gamma}_j|\leq 1$ and the velocity can be identified with a vector in $\mathbb{R}^{2n}$,
$$\dot{\gamma}_j(t) = (\dot{\gamma}_1(t),\dots,\dot{\gamma}_{2n}(t)) \in \mathbb{R}^{2n},$$
we have that $\{\dot{\gamma}_j\}$ is bounded in the Hilbert space $L^2([0,\ell],\mathbb{R}^{2n})$. Hence, by weak compactness of closed balls in Hilbert spaces, we can extract a subsequence (still denoted $\dot{\gamma}_{j(i)}$) such that $\dot{\gamma}_{j(i)} \rightharpoonup g$ in $L^2([0,\ell],\mathbb{R}^{2n})$.

Since each $\gamma_j$ is absolutely continuous and $d_\infty(\gamma_j,\gamma) \to 0$, we obtain for every $t \in [0,\ell]$,
$$\gamma(t) = \lim_{i\to\infty}\gamma_{j(i)}(0) + \lim_{i\to\infty}\int_0^t \dot{\gamma}_{j(i)}(s)\,ds = \gamma(0) + \int_0^t g(s)\,ds,$$
which implies $\dot{\gamma}(t)=g(t)$ for almost every $t\in[0,\ell]$. Therefore,
$$
\begin{aligned}
[\gamma_{j(i)}](\varphi)-[\gamma](\varphi)
&=\sum_{k=1}^{2n}\int_0^\ell\bigl(\varphi_k(\gamma_{j(i)}(t)) - \varphi_k(\gamma(t))\bigr)\dot{\gamma}_{j(i)}^k(t)\,dt\\
&
+\sum_{k=1}^{2n}\int_0^\ell\varphi_k(\gamma(t))\bigl(\dot{\gamma}_{j(i)}^k(t)-\dot{\gamma}^k(t)\bigr)\,dt.
\end{aligned}
$$
The first term tends to $0$ by uniform convergence of $\gamma_{j(i)}$ to $\gamma$ and boundedness of $\dot{\gamma}_{j(i)}$, while the second term tends to $0$ by weak convergence of $\dot{\gamma}_{j(i)}$ to $\dot{\gamma}$ in $L^2([0,\ell],\mathbb{R}^{2n})$.

Next we show that $|\mu| = \int |[\gamma]|d\nu(\gamma)$ and that $\nu$-almost all curves have length $\ell$. So far we only know that their length is bounded above by $\ell$. For an open set $E \subseteq \mathbb{H}^n$ and $\varphi \in \mathcal{D}(E,H\mathbb{H}^n)$ with $\varphi_i \in C^\infty_c(E)$ and $|\varphi|\leq 1$, we have
$$\langle \mu,\varphi\rangle = \int_{K_\ell} [\gamma](\varphi)d\nu(\gamma) \leq \int_{K_\ell} |[\gamma]|(E) d\nu(\gamma),$$
and hence $|\mu|(E)\leq \int_{K_\ell}|[\gamma]|(E) d\nu(\gamma)$. For $E = \mathbb{H}^n$, we have
$$\operatorname{var}(\mu) \leq \int_{K_\ell} \operatorname{var}([\gamma])d\nu(\gamma) \leq \int_{K_\ell}L(\gamma)d\nu(\gamma) \leq \ell \nu(K_\ell).$$
Since we have $\operatorname{var}(\mu) = \ell \nu(K_\ell)$, all these are equalities. This implies that $\operatorname{var}([\gamma]) = L(\gamma) = \ell$ for $\nu$-almost all $\gamma \in K_\ell$ and $|\mu| = \int |[\gamma]|d\nu(\gamma)$ by the regularity of Borel measures.

Finally, we show that $\nu$-almost every curve takes values in $\operatorname{spt}\mu$. First recall that
$$|[\gamma]|(E) \leq \int_{\{t: \gamma(t) \in E\}}|\dot{\gamma}(t)|dt.$$
For $\gamma \in K_\ell$ with $\operatorname{var}([\gamma]) = L(\gamma) = \ell$, we thus get $|\dot{\gamma}| = 1$ for almost all $t \in [0,\ell]$, together with equality in the previous line. For $E = \operatorname{spt}(\mu)^c$ we obtain
$$0 = |\mu|(E) = \int_{K_\ell}\mathcal{L}^1(\{t \in [0,\ell]: \gamma(t) \in E\})d\nu(\gamma)$$
which implies that the open sets $\{t \in [0,\ell] : \gamma(t) \in E\}$ are empty for $\nu$-almost all $\gamma$.
\end{proof}

\begin{theorem}
(Smirnov's theorem for $\operatorname{div}\mu$ being a signed measure) Let $\mu$ be a horizontal vector charge on $\mathbb{H}^n$ whose weak horizontal divergence $\operatorname{div}_H(\mu)$ is a finite signed measure. Then there exist a constant $\ell > 0$ and a finite positive measure $\nu$ supported on $K_\ell$, the space of horizontal curves in $\mathbb{H}^n$, such that
$$
\mu = \int_{K_\ell} [\gamma] \, d\nu(\gamma).
$$
In particular, the representing curves have length at most $\ell$. Moreover, this decomposition satisfies the total variation identity
$$
|\mu| = \int_{K_\ell} |[\gamma]| \, d\nu(\gamma),
$$
and for $\nu$-almost every $\gamma \in K_\ell$, the curve $\gamma$ takes values in $\operatorname{supp}(\mu)$ and satisfies $\operatorname{var}([\gamma]) = L(\gamma)$.
\end{theorem}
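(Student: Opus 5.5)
The plan is to reduce to Theorem \ref{thm: divergence free} by closing up the boundary of $\mu$ with an auxiliary vertex, the classical trick from Smirnov's original proof. Write $\operatorname{div}_H\mu = \sigma = \sigma^+ - \sigma^-$ (Jordan decomposition). Testing with $f\equiv 1$ on large balls via cutoffs gives $\sigma^+(\mathbb{H}^n)=\sigma^-(\mathbb{H}^n)$, since the charge has finite mass. The cutoffs $\chi_R(x)=\chi(\delta_{1/R}x)$ satisfy $|\nabla_H\chi_R|\lesssim 1/R$. Put $M:=\sigma^+(\mathbb{H}^n)$.

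Rather than introducing an extra point outside the group, I would compactify along a horizontal direction. Add one extra Heisenberg factor (or extra coordinate) and work in a product space $\mathbb{H}^n\times\mathbb{R}$ with the horizontal bundle enlarged by $\partial_s$. In this space, build the closed charge
$$\hat\mu=\mu\otimes\delta_0+\int (\text{segment from }(x,0)\text{ to }(x,1))\,d\sigma^-(x)-\int(\text{segment }(x,0)\to(x,1))\,d\sigma^+(x)+\text{a transport charge at level }s=1.$$
The transport charge carries $\sigma^+$ to $\sigma^-$, and this is where the difficulty lies. A simpler standard variant avoids transport: join every boundary point to a single common far point $p_\ast$ by horizontal geodesics. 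This gives $\hat\mu=\mu+\int[\gamma_{x\to p_\ast}]\,d\sigma^-(x)-\int[\gamma_{x\to p_\ast}]\,d\sigma^+(x)$. The computation $\operatorname{div}_H[\gamma]=\delta_{\gamma(a)}-\delta_{\gamma(b)}$ from Section 3 shows that $\operatorname{div}_H\hat\mu=0$, because the $\delta_{p_\ast}$ contributions cancel by mass balance. The point $p_\ast$ should be chosen so that a measurable selection $x\mapsto\gamma_{x\to p_\ast}$ of geodesics exists; CC geodesics from a fixed point are unique off a null cut-locus, and a Borel selection exists in any case by Kuratowski--Ryll-Nardzewski.

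Next, apply Theorem \ref{thm: divergence free} to $\hat\mu$, which is a finite-mass, horizontal, divergence-free charge, with a length $\ell$ chosen below. This gives $\hat\nu$ on $K_\ell$ with $\hat\mu=\int[\gamma]\,d\hat\nu$ and $|\hat\mu|=\int|[\gamma]|\,d\hat\nu$. Then restrict each $\gamma$ to the parameter set $\{t:\gamma(t)\in\operatorname{supp}\mu\}$. Because of the mass identity, there is no cancellation between $\mu$ and the added geodesic pieces provided $|\hat\mu|=|\mu|+(\text{mass of added pieces})$. I would enforce this by choosing $p_\ast$ and the geodesics so that the added pieces meet $\operatorname{supp}\mu$ in a $|\mu|$-null set, which is possible when $p_\ast$ is generic. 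Since we only need to find \emph{some} $\ell$, we can take $\ell$ larger than all added geodesic lengths. The added lengths are at most $\sup_{x\in\operatorname{supp}|\sigma|}d_{CC}(x,p_\ast)$, which is finite if $\sigma$ has bounded support. For unbounded support, I would instead use for each $x$ the geodesic to $p_\ast$ truncated via a dyadic decomposition, or reparametrize curves so that $\ell$ bounds speed-normalized lengths.

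Finally, I restrict: the pieces of each curve lying in $\operatorname{supp}\mu$ are sub-arcs, and I reparametrize them to $[0,\ell]$ with speed at most one. This yields a measure $\nu$ on $K_\ell$ with $\mu=\int[\gamma]\,d\nu$. The properties $|\mu|=\int|[\gamma]|\,d\nu$ and $\operatorname{var}[\gamma]=L(\gamma)$ are inherited from the equality case in Theorem \ref{thm: divergence free}, and restriction preserves them. Restricted arcs can be countably many per curve, but a measurable enumeration of the connected components of an open subset of $[0,\ell]$ handles this.

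I expect the main obstacle to be the non-cancellation requirement $|\hat\mu|=|\mu|+|\text{added}|$ together with the measurability of the restriction map $\gamma\mapsto$ its sub-arcs. If the generic-$p_\ast$ argument fails, the fallback is the product-space construction above, where the added pieces live in $\{s>0\}$ and are automatically mutually singular with $\mu\otimes\delta_0$.
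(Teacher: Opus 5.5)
There is a genuine gap. Your main route, coning the boundary off to a far point $p_\ast$, fails exactly where you suspected, and a generic choice of $p_\ast$ does not repair it.

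Every added geodesic starts at a point of $\operatorname{supp}(\operatorname{div}_H\mu)\subseteq\operatorname{supp}\mu$. So the union of the added curves contains $\operatorname{div}_H\mu$-almost every starting point. Suppose, for instance, that $\operatorname{div}_H\mu$ and $|\mu|$ both have positive densities on some open set. Then your sufficient condition (added pieces meet $\operatorname{supp}\mu$ in a $|\mu|$-null set) fails for every $p_\ast$. The added flux then overlaps $\mu$ with possible cancellation, so $|\hat\mu|<|\mu|+|\text{added}|$ can occur. In that case the decomposition of $\hat\mu$ from Theorem \ref{thm: divergence free} no longer splits into a part representing $\mu$ and a part representing the cone.

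The restriction step is also flawed on its own. The set $\{t:\gamma(t)\in\operatorname{supp}\mu\}$ is closed, not open, and it can be a Cantor-type set of positive length. Hence $[\gamma]\llcorner\operatorname{supp}\mu$ is in general not a countable sum of charges of sub-arcs. Cutting along $\operatorname{supp}\mu$ is also the wrong cut whenever added pieces run inside $\operatorname{supp}\mu$. Taking $\ell$ larger than the cone lengths does not help either, since Theorem \ref{thm: divergence free} returns arbitrary flow pieces of length exactly $\ell$, not closed loops.

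Your fallback is the route the paper takes, but it lacks the two ideas that make it work.

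First, the ``transport charge at level $s=1$'' is no difficulty: a reversed copy of $\mu$ has exactly the required divergence. The paper takes $\mu^+=\bigl(\mu\otimes(\delta_0-\delta_\ell),\,-\operatorname{div}_H\mu\otimes\mathcal{H}^1\llcorner[0,\ell],\,0\bigr)$. It realizes this inside $\mathbb{H}^{n+1}$ on $\{y_{n+1}=0\}$, where $X_{n+1}=\partial_{x_{n+1}}$, so Theorem \ref{thm: divergence free} applies verbatim. The vertical height is the same $\ell$ as the curve length.

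Second, and this is the heart of the proof: mutual singularity of the level-$0$ part does not by itself make $(\gamma^+)^{-1}(\mathbb{H}^n\times\{0\})$ an interval. The paper proves a rigidity statement. Write $-\operatorname{div}_H\mu=g\,|\operatorname{div}_H\mu|$ and $E_\pm=\{g=\pm1\}$. Pair $\mu^+$ with $\chi_{E_+\times(0,\ell)}X_{n+1}$ and use $|\mu^+|=\int|[\gamma^+]|\,d\nu^+$. A chain of inequalities then collapses to equalities and forces $\dot\gamma^+=\pm X_{n+1}$ whenever $\gamma^+\in E_\pm\times(0,\ell)$. Because the vertical segments have height $\ell$ and the curves have length $\ell$, a curve that leaves level $0$ can never return. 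So each curve meets $\mathbb{H}^n\times\{0\}$ in a single interval $[\alpha,\beta]$. Projecting and reparametrizing that arc gives $\nu$ with $\mu=\int[\gamma]\,d\nu$, and the mass identity follows from $|\mu|(A)=|\mu^+|(A\times\{0\})$. Without this rigidity step, your restriction-and-reparametrization cannot be carried out.
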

\begin{proof}
Let $\mu = (\mu_1,\dots,\mu_{2n})$ be a horizontal vector charge on $\mathbb{H}^n$ such that $\operatorname{div}_H(\mu)$ is a signed measure. We define an $(n+1)$-dimensional vector charge by embedding $\mathbb{H}^n \times \mathbb{R}$ into $\mathbb{H}^{n+1}$. Notice that an element of $\mathbb{H}^n$ can be written as $p = (\mathbf{x},\mathbf{y},z) \in \mathbb{R}^n \times \mathbb{R}^n \times \mathbb{R}$, while an element of $\mathbb{H}^{n+1}$ can be written as
$$
(\mathbf{x}, x_{n+1}, \mathbf{y}, y_{n+1}, z).
$$
The horizontal bundle $H\mathbb{H}^{n+1}$ is spanned by $\{X_1, \dots, X_n, X_{n+1}, Y_1, \dots, Y_n, Y_{n+1}\}$, where
$$
X_i = \partial_{x_i} - \frac{1}{2} y_i \, \partial_z
\quad \text{and} \quad
Y_i = \partial_{y_i} + \frac{1}{2} x_i \, \partial_z,
\qquad i = 1, \dots, n+1.
$$
Define the embedding $E : \mathbb{H}^n \times \mathbb{R} \hookrightarrow \mathbb{H}^{n+1}$ by setting $x_{n+1} = t$ and $y_{n+1} = 0$:
$$
E((\mathbf{x},\mathbf{y},z), t) = (\mathbf{x}, t, \mathbf{y}, 0, z).
$$
Restricted to the image $E(\mathbb{H}^n \times \mathbb{R})$, the $(n+1)$st horizontal vector field simplifies to
$$
X_{n+1}|_{E(\mathbb{H}^n \times \mathbb{R})} = \partial_{x_{n+1}} = \partial_t.
$$
Thus this left-invariant horizontal direction behaves like a flat Euclidean direction, orthogonal to the other frame fields and decoupled from the central coordinate $z$. Moreover, this embedding preserves horizontality of curves. Indeed, let $\gamma(s) = (\mathbf{x}(s),\mathbf{y}(s),z(s))$ be a horizontal curve in $\mathbb{H}^n$, and let $t(s)$ be a trajectory in $\mathbb{R}$. Then the lifted curve $\hat{\gamma}(s) = E(\gamma(s), t(s))$ in $\mathbb{H}^{n+1}$ satisfies
$$
\dot{z} = \frac{1}{2} \sum_{i=1}^n (x_i \dot{y}_i - y_i \dot{x}_i) + \frac{1}{2} \bigl( x_{n+1} \dot{y}_{n+1} - y_{n+1} \dot{x}_{n+1} \bigr)
= \frac{1}{2} \sum_{i=1}^n (x_i \dot{y}_i - y_i \dot{x}_i),
$$
because $y_{n+1} = 0$ and $\dot{y}_{n+1} = 0$, so the second term vanishes identically. We now define a lifted horizontal vector charge $\mu^+$ on $\mathbb{H}^{n+1}$, supported on $E(\mathbb{H}^n \times \mathbb{R})$. In the frame $\{X_1,\dots, X_n, X_{n+1}, Y_1,\dots, Y_n, Y_{n+1}\}$, we set
$$
\mu^+ = \left(\mu \otimes (\delta_0 - \delta_\ell), -\operatorname{div}_H(\mu) \otimes \mathcal{H}|_{[0,\ell]}, \mathbf{0}_{Y_{n+1}} \right),
$$
where the first component lies in $H\mathbb{H}^n$, the second is strictly in the direction of $X_{n+1}$, and the $Y_{n+1}$ component is zero. The natural projection $\pi : H \mathbb{H}^{n+1} \to H \mathbb{H}^n$ restricts $\mu^+$ back down; for any Borel set $A \in \mathcal{B}(\mathbb{H}^n)$, we have $\pi \circ\mu^+ (A \times \{0\}) = \mu(A)$.

Let $\varphi \in \mathcal{D}(E(\mathbb{H}^n \times \mathbb{R}))$ be a scalar test function. To distinguish the horizontal gradients on $\mathbb{H}^n$ and $\mathbb{H}^{n+1}$, we keep the notation $\nabla_H$ for the horizontal gradient on $\mathbb{H}^n$ and write $\nabla_H^{(n+1)}$ for the horizontal gradient on $\mathbb{H}^{n+1}$. Along $E(\mathbb{H}^n \times \mathbb{R})$, the additional component is the partial derivative with respect to $t$, and thus
$$
\nabla_H^{(n+1)}\varphi = (\nabla_H\varphi,\partial_t\varphi, 0).
$$
By duality, the weak divergence is defined as $\left\langle\operatorname{div}^{(n+1)}_H(\mu^+), \varphi\rangle = -\langle\mu^+, \nabla_H^{(n+1)}\varphi\right\rangle$. Therefore
$$\begin{aligned}
\left\langle \operatorname{div}^{(n+1)}_H(\mu^+), \varphi \right\rangle
&= -\left\langle \mu \otimes (\delta_0 - \delta_{\ell}), \nabla_H \varphi \right\rangle
- \left\langle -\left.\operatorname{div}_H(\mu) \otimes \mathcal{H}^1\right|_{[0, \ell]}, \partial_t \varphi \right\rangle \\
&= -\left\langle \mu, \nabla_H \varphi(\cdot, 0) - \nabla_H \varphi(\cdot, \ell) \right\rangle
+ \left\langle \operatorname{div}_H(\mu), \int_{[0, \ell]} \partial_t \varphi(\cdot, t) \, dt \right\rangle \\
&= \left\langle \operatorname{div}_H(\mu), \varphi(\cdot, 0) - \varphi(\cdot, \ell) \right\rangle
+ \left\langle \operatorname{div}_H(\mu), \varphi(\cdot, \ell) - \varphi(\cdot, 0) \right\rangle \\
&= 0.
\end{aligned}$$
Theorem \ref{thm: divergence free} yields a finite positive measure $\nu^+$ on 
$$
K_{\ell}^+=\left\{\gamma^+:[0, \ell] \rightarrow \mathbb{H}^{n+1}: \gamma^+ \text { is Lipschitz with }|\dot{\gamma}| \leq 1 \text{ for } \mathcal{H}^1 \text {-a.e. }\right\}
$$
with
$$\mu^+ = \int_{K_\ell^+} [\gamma^+]d\nu^+(\gamma^+),\quad |\mu^+| = \int_{K_\ell^+}|[\gamma^+]|d\nu^+(\gamma^+).$$
We first analyze the behavior of the curves in $S = \mathbb{H}^n \times (0,\ell)$. We have
$$\mu^+\llcorner S = (0,-\operatorname{div}_H(\mu)\otimes \mathcal{H}^1\llcorner(0,\ell)),\quad |\mu^+\llcorner S| = |\operatorname{div}_H(\mu)|\otimes \mathcal{H}^1\llcorner (0,\ell)$$
and $\nu^+$-almost all $\gamma^+ \in K_\ell^+$ satisfy $\operatorname{var}([\gamma]) = L(\gamma) = \ell$. The polar decomposition is $\mu^+ \llcorner S = f\cdot|\operatorname{div}_H(\mu)|\otimes \mathcal{H}^1 \llcorner (0,\ell)$, where $f(x,t) = (0,g(x))$ with $-\operatorname{div}_H(\mu) = g\cdot |\operatorname{div}_H(\mu)|$ where $|g(x)| =1$ for $|\operatorname{div}_H(\mu)|$-a.e. $x \in \mathbb{H}^n$. We set
$$E_\pm = \left\{x \in \operatorname{supp}(\operatorname{div}_H(\mu)): g(x) = \pm 1\right\},\quad \operatorname{supp}(\operatorname{div}_H(\mu)) = E_+ \cup E_-.$$
We claim for $\nu^+$-almost all curves $\gamma^+$ and $\mathcal{H}^1$-almost all parameter times $s$ that $\gamma^+(s) \in E_\pm \times (0,\ell)$ implies $\dot{\gamma}^+_{n+1}(s) = \pm 1$ (in the direction of $X_{n+1}$).

To see this, let $-d\operatorname{div}_H(\mu) = g d|\operatorname{div}_H(\mu)|$ be the Radon-Nikodym density, and define the sets $E_\pm = \{x \in \mathbb{H}^n : g(x) = \pm 1\}$. Let $\chi_+$ be the indicator function of the set $E_+ \times (0,\ell)$. Using the parameter $s$ for the curves, we have the following sequence of inequalities:
\begin{align*}
&\nu^+\otimes \mathcal{H}^1\llcorner(0,\ell) \left(\{(\gamma^+,t): \gamma^+(t) \in E_+ \times (0,\ell)\}\right) = \int_{K_\ell}\int_0^\ell \chi_+(\gamma^+(t))d\nu^+\otimes d\mathcal{H}^1(\gamma^+,t)\\
&\geq \int_{K_\ell^+}\int_0^\ell (\chi_+ \circ \gamma^+)(t)\dot{\gamma}^+_{n+1}(t)d\mathcal{H}^1(t)d\nu^+(\gamma^+) = \int_{K_\ell^+} \langle [\gamma^+],(0,\dots,0,\chi_+) \rangle d\nu^+(\gamma^+)\\
&= \langle \mu^+,(0,\dots,0,\chi_+)\rangle = \int_{\mathbb{H}^{n+1}}\chi_+ d(-\operatorname{div}_H(\mu) \otimes \mathcal{H}^1\llcorner(0,\ell))\\
&=\int_{\mathbb{H}^{n+1}}\chi_+ gd\left(|\operatorname{div}_H(\mu)|\otimes \mathcal{H}^1\llcorner(0,\ell)\right) = |\mu^+|(E_+ \times (0,\ell))\\
&=\int_{K_\ell^+}|[\gamma^+]|(E_+ \times (0,\ell))d\nu^+(\gamma^+) = \int_{K_\ell^+} \mathcal{H}^1\left((\gamma^+)^{-1}(E_+ \times (0,\ell))\right)d\nu^+(\gamma^+)\\
&=\nu^+ \otimes \mathcal{H}^1\llcorner(0,\ell)\left(\{(\gamma^+,t): \gamma^+(t) \in E_+ \times (0,\ell)\}\right).
\end{align*}
We conclude that the above inequalities are in fact equalities. It follows that$$\chi_+(\gamma^+(s))\dot{\gamma}^+_{n+1}(s) = \chi_+(\gamma^+(s)),\quad \text{for } \mathcal{H}^1 \text{-a.e. } s.$$
Since curves in $K_\ell^+$ are absolutely continuous with $|\dot{\gamma}^+| \leq 1$, forcing $\dot{\gamma}^+_{n+1} = 1$ simultaneously forces all other components of the velocity to zero. We obtain that $\nu^+$-almost all curves are strictly vertical when they pass through the vertical regions $S= \operatorname{supp}(|\operatorname{div}_H(\mu)|) \times (0,\ell)$. In particular, the only non-zero component of $\dot{\gamma}^+$ is $X_{n+1}$.

We next define
$$D = \left\{\gamma^+ \in K_\ell^+ : \gamma^+(0) \in (E_-\times (0,\ell))\cup(\mathbb{H}^n \times (0,\ell)) \right\}.$$
Let $\alpha(\gamma^+) = \inf\left\{s : \gamma^+(s) \in \mathbb{H}^n \times \{0\}\right\}$ and $\beta(\gamma^+) = \sup\left\{s : \gamma^+(s) \in \mathbb{H}^n \times \{0\}\right\}$. For $\nu^+$-almost every curve $\gamma^+ \in D$ we then have exactly one contiguous interval on the horizontal plane:$$\{s : \gamma^+(s) \in \mathbb{H}^n \times \{0\}\} = [\alpha(\gamma^+),\beta(\gamma^+)],$$
since $E_+ \cap E_- = \emptyset$ and $\gamma^+$ cannot have more than one sharp turning point between the boundary of the horizontal plane $\mathbb{H}^n \times \{0\}$ and the vertical direction $X_{n+1}$. Furthermore, we have
$$\dot{\gamma}^+(t) = (0,\dots,0, \pm 1),\quad \text{for } t \in [0,\ell] \backslash [\alpha(\gamma),\beta(\gamma)].$$
For $A \in \mathcal{B}(\mathbb{H}^n)$, we have $[\gamma^+](A \times \{0\}) = ([\pi \circ \gamma^+](A),0)$ and since $\nu^+$-almost all curves in $K_\ell^+ \backslash D$ do not meet at the plane $\mathbb{H}^n \times \{0\}$ we obtain
\begin{align*}
(\mu,0)(A\times \{0\}) &= \mu^+\llcorner(\mathbb{H}^n \times \{0\})(A\times\{0\}) = \int_D \langle [\gamma^+], \chi_{A \times \{0\}}\rangle d\nu^+(\gamma^+)\\
&=\int_D (\langle [\pi\circ \gamma^+],\chi_A\rangle,0)d\nu^+(\gamma^+)
\end{align*}
Taking the first $n$-coordinates we thus have
$$\mu = \int_D [\pi \circ \gamma^+]d\nu^+(\gamma).$$
Let us define a reparameterized projection map $\mathbf{P}(\gamma^+) = \pi (\gamma^+(\max\left\{\alpha(\gamma^+), \min\left\{s,\beta(\gamma^+)\right\}\right\}))$.  Then 
$$\mathbf{P}_\#(\nu^+\llcorner D) = \nu.$$
Then $\nu$-almost all curves in $K_{\ell}$ have values in $\operatorname{supp}(\mu)$ and satisfy $\operatorname{var}([\gamma])=L(\gamma)$ because for $\nu^{+}$-almost all curves $\gamma^+$ the projections $\pi \circ \gamma^+$ have this property. Finally, we preserve the total variation $|\mu|=\int_{K_{\ell}}|[\gamma]| d \nu(\gamma)$ due to the corresponding length-conservation property for $\mu^{+}$ and the identity $|\mu|(A)=\left|\mu^{+}\right|(A \times\{0\})$.
\end{proof}
We finally state an application of Theorem 5.5. Its proof follows analogously as the corresponding Theorem 2.3 in [RW24].

\begin{corollary}
(Havin-Smirnov). Let $M \subseteq \mathbb{H}^1$ be a compact set and all rectifiable curves in $M$ are constant. Then for any $f,f_1,\dots,f_m \in C(M)$ and for any $\varepsilon > 0$ there exists $\psi \in \mathcal{D}(\mathbb{H}^1, \mathbb{R})$ such that
$$\sup_{x \in M} |f(x)-\psi(x)| + \sum_{j=1}^m \sup_{x\in M}|f_j - (\nabla_H \psi)_j| < \varepsilon.$$
\end{corollary}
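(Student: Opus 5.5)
The plan is a Hahn--Banach duality argument, following the Euclidean proof of Havin--Smirnov in [RW24], with the previous Smirnov decomposition for charges with measure divergence as the key input. Consider the linear map
$$
T:\mathcal{D}(\mathbb{H}^1,\mathbb{R})\to C(M)^{m+1},\qquad T\psi=\bigl(\psi|_M,(\nabla_H\psi)_1|_M,\dots,(\nabla_H\psi)_m|_M\bigr),
$$
where only the $m$ horizontal components are used ($m\le 2$ in $\mathbb{H}^1$; extra $f_j$ are handled by the same argument). Put the norm $\|(g,g_1,\dots,g_m)\|=\sup|g|+\sum_j\sup|g_j|$ on $C(M)^{m+1}$. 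The claim is exactly that the range of $T$ is dense. If it is not, Hahn--Banach and Riesz give a nonzero functional on $C(M)^{m+1}$ vanishing on the range: a scalar measure $\lambda$ and measures $\mu_1,\dots,\mu_m$ on $M$, not all zero, with
$$
\int_M\psi\,d\lambda+\sum_j\int_M(\nabla_H\psi)_j\,d\mu_j=0\qquad\forall\psi\in\mathcal{D}(\mathbb{H}^1,\mathbb{R}).
$$

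Set $\mu=(\mu_1,\dots,\mu_m)$, padded with zeros, as a horizontal vector charge supported in $M$. The identity reads $\langle \operatorname{div}_H\mu,\psi\rangle=-\langle\mu,\nabla_H\psi\rangle=\langle\lambda,\psi\rangle$, so $\operatorname{div}_H\mu=\lambda$ is a finite signed measure. The preceding theorem therefore applies and gives $\ell>0$ and $\nu$ on $K_\ell$ with $\mu=\int[\gamma]\,d\nu$ and $|\mu|=\int|[\gamma]|\,d\nu$. Moreover, $\nu$-a.e. $\gamma$ takes values in $\operatorname{supp}\mu\subseteq M$ and satisfies $\operatorname{var}([\gamma])=L(\gamma)$.

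Each such $\gamma$ is a Lipschitz, hence rectifiable, horizontal curve in $M$. By hypothesis it is constant, so $L(\gamma)=0$, hence $\operatorname{var}[\gamma]=0$ and $[\gamma]=0$. Then $|\mu|=\int|[\gamma]|\,d\nu=0$, so $\mu=0$, and consequently $\lambda=\operatorname{div}_H\mu=0$. This contradicts nontriviality of the annihilating functional, so the range is dense. Picking $\psi$ with $\|T\psi-(f,f_1,\dots,f_m)\|<\varepsilon$ gives the estimate.

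The main obstacle is making sure the previous theorem applies with the needed conclusions, specifically:
\begin{itemize}
\item curves supported in $\operatorname{supp}\mu$;
\item the total-variation identity.
\end{itemize}
This holds even though $\operatorname{div}_H\mu$ may be supported where $\mu$ is not. A minor additional point is checking that "rectifiable" in the hypothesis covers $d_{CC}$-Lipschitz curves, which it does since they have finite length.
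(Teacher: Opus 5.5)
Your proposal is correct and follows the route the paper intends. The paper gives no separate proof here and instead defers to Theorem 2.3 of [RW24], which is the same Hahn--Banach/Riesz duality argument: the annihilating measures satisfy $\operatorname{div}_H\mu=\lambda$, the Smirnov decomposition for charges with measure divergence represents $\mu$ by curves in $\operatorname{supp}\mu\subseteq M$, these curves are constant, and so $\mu=0$ and $\lambda=0$.

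One minor simplification: you do not need the total-variation identity, since constant curves already give $[\gamma]=0$ and hence $\mu=\int[\gamma]\,d\nu=0$ directly.
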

\printbibliography
\noindent\textsc{Zhengyao Huang}, Department of Mathematics, Durham University, UK\\
\textit{Email address}: \texttt{phqv76@durham.ac.uk}\\
\noindent\textsc{Wilhelm Klingenberg}, Department of Mathematics, Durham University, UK\\
\textit{Email address}: \texttt{wilhelm.klingenberg@durham.ac.uk}
\end{document}

%% file: prefix.tex
\def\UseBibLatex{1}

\makeatletter
\def\input@path{{styles/}}
\makeatother

\newcommand{\UsePackage}[1]{%
  \IfFileExists{styles/#1.sty}{%
      \usepackage{styles/#1}%
   }{%
      \IfFileExists{../styles/#1.sty}{%
         \usepackage{../styles/#1}%
      }{%
         \usepackage{#1}%
      }%
   }%
}

\usepackage[T1]{fontenc}
\usepackage{bbold}
\usepackage{lmodern}
\usepackage{textcomp}
\usepackage{amsfonts}
\usepackage{mathtools}
\usepackage{amsmath}%
\usepackage{mathrsfs}
\usepackage{amssymb}%
\usepackage[table]{xcolor}%

\usepackage{todonotes}
\usepackage[in]{fullpage}%

\usepackage[amsmath,thmmarks]{ntheorem}%
\theoremseparator{.}%
\usepackage{stmaryrd}
\usepackage{titlesec}%
\titlelabel{\thetitle. }%
\usepackage{xcolor}%
\usepackage{mleftright}%
\usepackage{xspace}%
\usepackage{graphicx}
\usepackage{hyperref}%

\usepackage{tcolorbox}
\definecolor{myviolet}{rgb}{0.73,0.56,0.64}
\newtcolorbox{mybox}{
    arc=0pt,
    boxrule=0pt,
    colback=myviolet,
    width=1\textwidth,   
    colupper=white,
    fontupper=\bfseries
}
\usepackage{hyperref}%
\hypersetup{%
      unicode,
      breaklinks,%
      colorlinks=true,%
      urlcolor=[rgb]{0.25,0.0,0.0},%
      linkcolor=[rgb]{0.5,0.0,0.0},%
      citecolor=[rgb]{0,0.2,0.445},%
      filecolor=[rgb]{0,0,0.4},
      anchorcolor=[rgb]={0.0,0.1,0.2}%
}
\usepackage[ocgcolorlinks]{ocgx2}

\theoremseparator{.}%

\theoremstyle{plain}%
\newtheorem{theorem}{Theorem}[section]

\newtheorem{lemma}[theorem]{Lemma}

\newtheorem{corollary}[theorem]{Corollary}

\newtheorem{proposition}[theorem]{Proposition}

\theoremstyle{plain}%
\theoremheaderfont{\sf} \theorembodyfont{\upshape}%
\newtheorem*{remark:unnumbered}[theorem]{Remark}%
\newtheorem{remark}[theorem]{Remark}%
\newtheorem{definition}[theorem]{\textbf{Definition}}

\newcommand{\myqedsymbol}{\rule{2mm}{2mm}}
\usepackage{cancel}
\theoremheaderfont{\em}%
\theorembodyfont{\upshape}%
\theoremstyle{nonumberplain}%
\theoremseparator{}%
\theoremsymbol{\myqedsymbol}%
\newtheorem{proof}{Proof:}%

\providecommand{\emphind}[1]{}%
\renewcommand{\emphind}[1]{\emph{#1}\index{#1}}

\definecolor{blue25emph}{rgb}{0, 0, 11}

\providecommand{\emphic}[2]{}
\renewcommand{\emphic}[2]{\textcolor{blue25emph}{%
      \textbf{\emph{#1}}}\index{#2}}

\providecommand{\emphi}[1]{}%
\renewcommand{\emphi}[1]{\emphic{#1}{#1}}

\definecolor{almostblack}{rgb}{0, 0, 0.3}

\providecommand{\emphw}[1]{}%
\renewcommand{\emphw}[1]{{\textcolor{almostblack}{\emph{#1}}}}%

\providecommand{\emphOnly}[1]{}%
\renewcommand{\emphOnly}[1]{\emph{\textcolor{blue25}{\textbf{#1}}}}

\newcommand{\HLink}[2]{\hyperref[#2]{#1~\ref*{#2}}}
\newcommand{\HLinkSuffix}[3]{\hyperref[#2]{#1\ref*{#2}{#3}}}

\providecommand{\deflab}[1]{}
\renewcommand{\deflab}[1]{\label{def:#1}}

\providecommand{\eqlab}[1]{}%
\renewcommand{\eqlab}[1]{\label{equation:#1}}

\newcommand{\remove}[1]{}%

\usepackage[inline]{enumitem}

\newlist{compactenumA}{enumerate}{5}%
\setlist[compactenumA]{topsep=0pt,itemsep=-1ex,partopsep=1ex,parsep=1ex,%
   label=(\Alph*)}%

\newlist{compactenuma}{enumerate}{5}%
\setlist[compactenuma]{topsep=0pt,itemsep=-1ex,partopsep=1ex,parsep=1ex,%
   label=(\alph*)}%

\newlist{compactenumI}{enumerate}{5}%
\setlist[compactenumI]{topsep=0pt,itemsep=-1ex,partopsep=1ex,parsep=1ex,%
   label=(\Roman*)}%

\newlist{compactenumi}{enumerate}{5}%
\setlist[compactenumi]{topsep=0pt,itemsep=-1ex,partopsep=1ex,parsep=1ex,%
   label=(\roman*)}%

\newlist{compactitem}{itemize}{5}%
\setlist[compactitem]{topsep=0pt,itemsep=-1ex,partopsep=1ex,parsep=1ex,%
   label=\ensuremath{\bullet}}%

\providecommand{\BibLatexMode}[1]{}
\providecommand{\BibTexMode}[1]{}

\ifx\UseBibLatex\undefined%
  \renewcommand{\BibLatexMode}[1]{}
  \renewcommand{\BibTexMode}[1]{#1}
\else
  \renewcommand{\BibLatexMode}[1]{#1}
  \renewcommand{\BibTexMode}[1]{}
\fi

\BibLatexMode{%
   \usepackage[bibencoding=utf8,style=alphabetic,backend=biber]{biblatex}%
   \UsePackage{my_biblatex}%
}

\numberwithin{figure}{section}%
\numberwithin{table}{section}%
\numberwithin{equation}{section}%

\usepackage{tikz}
\usepackage{tikz-3dplot}
\bibliography{template}
\usetikzlibrary{arrows.meta, positioning, shapes.geometric}
